\newtheorem{thm}{Theorem}
\newtheorem{prop}{Proposition}
\newtheorem{lemme}{Lemma}
\newtheorem{rmq}{Remark}
\newtheorem{exemple}{Example}
\theoremstyle{plain}
\begin{document}
\begin{frontmatter}
\title{Fast calibration of weak FARIMA models\\[0.5cm]
}
\runtitle{ One-step estimation of weak FARIMA models}
\begin{aug}
\author{\fnms{\Large{Samir}}
\snm{\Large{Ben Hariz}$^{\hspace*{0.07cm} \boldsymbol{1}}$}
\ead[label=e1]{Samir.Ben$\_$Hariz@univ-lemans.fr}}\\[2mm]
\author{\fnms{\Large{Alexandre}}
\snm{\Large{Brouste}$^{\hspace*{0.07cm} \boldsymbol{1}}$}
\ead[label=e2]{Alexandre.Brouste@univ-lemans.fr}}\\[2mm]
\author{\fnms{\Large{Youssef}}
\snm{\Large{Esstafa}$^{\hspace*{0.07cm} \boldsymbol{1}}$}
\ead[label=e3]{Youssef.Esstafa@univ-lemans.fr}}\\[2mm]
\author{\fnms{\Large{Marius}}
\snm{\Large{Soltane}$^{\hspace*{0.07cm} \boldsymbol{2}}$}
\ead[label=e4]{marius.soltane@utc.fr}}\\[2mm]
\runauthor{S. Ben Hariz, A. Brouste, Y. Esstafa and M. Soltane}
\begin{minipage}{7cm}
\affiliation{Le Mans Universit\'e}
\address{\hspace*{0cm}\\
$^{\hspace*{0.07cm} \boldsymbol{1}}$Le Mans Universit\'e,\\
Institut du Risque et de l'Assurance,\\
Laboratoire Manceau de Math\'ematiques,\\
Avenue Olivier Messiaen,\\
72085 Le Mans Cedex 9, France.\\[0.2cm]}
\end{minipage}
\begin{minipage}{7cm}
\affiliation{Universit\'e Bourgogne Franche-Comt\'e}
\address{\hspace*{0cm}\\
$^{\hspace*{0.07cm} \boldsymbol{2}}$LMAC EA 2222\\
Universit\'e de Technologie de Compi\`egne\\
CS 60319 - 57 Avenue de Landshut\\
60203 Compi\`egne Cedex, France.\\[0.2cm]}
\end{minipage} \\

\vspace{0.5cm}

\footnotesize\sf\href{Samir.Ben$\_$Hariz@univ-lemans.fr}{Samir.Ben$\_$Hariz@univ-lemans.fr}\\
\footnotesize\sf\href{Alexandre.Brouste@univ-lemans.fr}{Alexandre.Brouste@univ-lemans.fr}\\
\footnotesize\sf\href{Youssef.Esstafa@univ-lemans.fr}{Youssef.Esstafa@univ-lemans.fr}\\
\footnotesize\sf\href{marius.soltane@utc.fr}{marius.soltane@utc.fr}
\end{aug}

\vspace{0.5cm}
\begin{abstract} In this paper, we investigate the asymptotic properties of Le Cam's one-step estimator for weak Fractionally AutoRegressive Integrated Moving-Average (FARIMA) models. For these models, noises are uncorrelated but neither necessarily independent nor martingale differences errors. We show under some regularity assumptions that the one-step estimator is strongly consistent and asymptotically normal with the same asymptotic variance as the least squares estimator. We show through simulations that the proposed estimator reduces computational time compared with the least squares estimator. An application for providing remotely computed indicators for time series is proposed.
\end{abstract}
\begin{keyword}[class=AMS]
\kwd[Primary ]{62M10, 62M15}
\kwd[; Secondary ]{91B84}
\end{keyword}
\begin{keyword} Weak FARIMA models; Le Cam's one-step estimator; Least squares estimator; Consistency; Asymptotic Normality
\end{keyword}
\end{frontmatter}

\section{Introduction}
Time series often exhibit linear and/or nonlinear dependence. When large correlations for small lags are detected, short memory processes can suffice to model the dependence structure of the series. The ARMA processes (see for example \cite{Box&Jenkins} and \cite{fz98}) or VARMA for the multivariate framework (see \cite{lutkepohl} and \cite{yac-these}) are examples of short memory processes.

However, in many scientific disciplines and many applied fields, including hydrology, climatology, economics, finance and computer science, the autocorrelations of some time series decrease very slowly. This phenomenon may be due to several factors, in particular nonstationarity and/or long-range dependence.

A large audience of mathematicians has been attracted by long memory processes and their various applications (see for instance \cite{M1965}, \cite{MandelbrotVan} and \cite{Mandelbrot-Wallis1, Mandelbrot-Wallis2, Mandelbrot-Wallis3}), Granger in economics (\cite{Granger&Joyeux}), Dobrushin in physics (\cite{dobrushin1979non}) and, even earlier, Hurst in hydrology (\cite{hurst1951}). Long-range dependent processes constitute currently one of the popular areas of statistical research and occupy a central place in the time series literature (see \cite{Granger&Joyeux}, \cite{hosking1981LR}, \cite{Fox&Taqqu1986}, \cite{Dahlhaus1989}, \cite{palma}, \cite{Beran2013}, \cite{BMES2019}, among others).

The fractional autoregressive integrated moving-average (FARIMA, for short) model is widely used to model the long memory phenomenon. This model was first introduced by \cite{Granger&Joyeux} and then generalized to take into account short-term fluctuations in time series by \cite{hosking1981LR}. FARIMA models therefore have the advantage of jointly modeling the long memory behavior of time series and their short-term dynamics through a fractional integration parameter $d$ and autoregressive and moving-average parameters respectively. Their fame is partly due to their structure similar to the one of standard ARIMA models in which the differentiation exponent $d$ is an integer. FARIMA models are generally used with strong assumptions on the noise that limit their generality. We call strong FARIMA the standard models in which the error term is assumed to be an independent and identically distributed sequence (iid for short), and we speak about weak FARIMA models when the errors are uncorrelated but neither necessarily independent nor martingale differences. It is common in the time series literature to talk about the subclass of semi-strong FARIMA models when the associated innovation process is a semi-strong white noise, that is a stationary martingale difference.  An example of semi-strong white noise is the generalized autoregressive conditional heteroscedastic (GARCH) model (see \cite{FZ2010}). The distinction between strong, semi-strong or weak FARIMA models is therefore only a matter of noise assumptions with the following inclusions:
$$\left\{\text{strong FARIMA} \right\}\subset\left\{\text{semi-strong FARIMA}
\right\}\subset\left\{\text{weak FARIMA}\right\}.$$

The independence of the noise in strong FARIMA models is often considered to be very restrictive for many time series with general nonlinear dependencies\footnote{See for instance \cite{Tong1990}, \cite{fz98}, \cite{frz}, \cite{BLR2006}, \cite{FY2008}, \cite{FZ2010}, \cite{yac}, \cite{yac2}, \cite{shao2011}, \cite{bmcf}, \cite{shaox}, \cite{BM2014} and \cite{BMS2018} for some references on nonlinear time series models.}. Weak FARIMA models correct this problem by allowing the noise to contain very general nonlinear dependencies of often unidentified structures. They therefore have the great advantage of providing linear modeling to nonlinear processes.

The asymptotic theory of estimation is mainly limited to strong and semi-strong FARIMA models. Whittle's estimator (see  \cite{Whittle1953}) is commonly used to estimate the parameters of FARIMA models (see for example \cite{Fox&Taqqu1986}, \cite{Dahlhaus1989}, \cite{GS1990} and \cite{Taqqu1997}).
The study of the asymptotic properties of this estimator is developed in the case where the errors are assumed to be independent and identically distributed and in the framework where the noise is considered to be a martingale difference (see \cite{Beran1995}, \cite{baillie1996}, \cite{ling-li}, \cite{hauser1998}, \cite{palma}, \cite{Beran2013}, among others). All this work is limited to the case where the nonlinear dependency is absent or with a well-identified structure. For example, in financial time series modeling, in order to capture conditional heteroscedasticity, it is common that innovations in FARIMA models are assumed to have a GARCH structure (see for example \cite{baillie1996}, \cite{hauser1998}).\\
For weak FARIMA models, the asymptotic normality has been obtained for Whittle's estimator (\cite{shaox, shao2010}) and the LSE (\cite{BMES2019}). In this paper, we propose a new calibration of weak FARIMA models based on Le Cam's one-step approach (see \cite{Le-Cam}). In Le Cam's one-step procedure, an initial guess estimator is corrected by a single step of Newton gradient descent method on loglikelihood function. We adapt in this work the Le Cam one-step procedure. Firstly, we propose the LSE on subsample as an initial estimator. Secondly, since we do not specify the distribution of the weak white noise, the single Newton step is done on the least squares functional.  This estimator greatly reduces the computation time and preserves the same asymptotic properties as the LSE. One-step procedure has shown its efficiency in terms of computation time and precision for diffusion processes \cite{KM15, GY20}, ergodic Markov chains \cite{KM16}, fractional Gaussian noise observed at high frequency \cite{BSV20} or stable noise~\cite{BM18} and inhomogeneous Poisson processes \cite{DGK18}. 

The paper is organized as follows. In Section~\ref{sec:one-step}, we introduce the model and the notations used in the sequel and we give the asymptotic properties of Le Cam's one-step estimator of the parameters of weak FARIMA models. In Section~\ref{sec:sim}, we provide some numerical illustrations to show the performance of the proposed estimator on finite sample sizes. All the technical proofs are gathered in Section~\ref{sec:proofs}.

\section{Le Cam's one-step estimation of weak FARIMA models}\label{sec:one-step}
In this section we present the parametrization that is used in the sequel and we study the asymptotic properties of the Le Cam one-step estimator of long memory FARIMA processes induced by uncorrelated but not independent error terms. We also recall the results on the asymptotic behavior of the least squares estimator of weak FARIMA models obtained by  \cite{BMES2019}. This estimator will be used as the initial estimator in Le Cam's one-step procedure.

\subsection{Statement of the problem and notations}

Let $(X_t)_{t\in\mathbb{Z}}$ be a long memory second-order stationary process satisfying a weak FARIMA$(p,d_0,q)$ representation of the form 
\begin{equation}\label{FARIMA}
a(L)(1-L)^{d_0}X_t=b(L)\epsilon_t,
\end{equation}
where $d_0\in(0,1/2)$ is the long memory parameter, $({\epsilon}_t)_{t\in\mathbb Z}$ is a sequence of uncorrelated random variables defined on some probability space $(\Omega,\mathcal{T},\mathbb{P})$ with zero mean and common variance $\sigma_{\epsilon}^2$, $L$ stands for the back-shift operator and
$a(L)=1-\sum_{i=1}^pa_iL^i$, respectively $b(L)=1-\sum_{i=1}^qb_iL^i$, is the autoregressive, respectively the moving-average,  operator. These operators represent the short memory part of the model and are supposed to have all their roots outside the unit disk with no zero in common to ensure the invertibility of the model and the unique identifiability of the parameters.

\noindent The fractional difference operator $(1-L)^{d_0}$ is defined, using the generalized binomial series, by
\begin{equation*}
(1-L)^{d_0}=\sum_{j\geq 0}\alpha_j(d_0)L^j,
\end{equation*}
where for all $j\ge 0$, $\alpha_j(d_0)=\Gamma(j-d_0)/\left\lbrace \Gamma(j+1)\Gamma(-d_0)\right\rbrace $ and $\Gamma(\cdot)$ is the Gamma function. It can be readily shown, see for example \cite{Beran2013}, that for large $j$,  $\alpha_j(d_0)\sim j^{-d_0-1}/\Gamma(-d_0)$ . It is therefore clear that the fractional difference operator impacts the speed of convergence to 0 of the coefficients in the AR($\infty$) and MA($\infty$) representations of Model \eqref{FARIMA} compared with standard short-memory ARMA models where this operator is absent. This loss of speed, compared to the exponential one of ARMA models, implies that the series of autocovariances of the process $(X_t)_{t\in\mathbb{Z}}$ defined in \eqref{FARIMA} is not absolutely summable. 

\noindent Let $\Theta^{*}$ be the parameter space
\begin{align*}
\Theta^{*}=&  \Big
\{\theta=(\theta_1,\theta_2,\dots,\theta_{p+q})
\in{\Bbb R}^{p+q};  a_{\theta}(z)=1-\sum_{i=1}^{p}\theta_{i}z^{i}\, \text{
and } b_{\theta}(z)=1-\sum_{j=1}^{q}\theta_{p+j}z^{j} \\
& \hspace{2.5cm} \text{ have all
their zeros outside the unit disk}\Big \}.
\end{align*}
Denote by $\Theta$ the Cartesian product $\Theta^{*}\times (0,1/2)$. Note that the unknown parameter of interest $\theta_0=(a_1,a_2,\dots,a_p,b_1,b_2,\dots,b_q,d_0)^{'}$ belongs to the parameter space $\Theta$.

\noindent For all $\theta=(\theta_1,\theta_2,\dots,\theta_{p+q},d)^\prime\in\Theta$, we define $( \epsilon_t(\theta)) _{t\in\mathbb{Z}}$ as the second-order stationary process which is the solution of
\begin{equation}\label{FARIMA-th}
\epsilon_t(\theta)=\sum_{j\geq 0}\alpha_j(d)X_{t-j}-\sum_{i=1}^p\theta_i\sum_{j\geq 0}\alpha_j(d)X_{t-i-j}+\sum_{j=1}^q\theta_{p+j}\epsilon_{t-j}(\theta).
\end{equation}
Observe that, for all $t\in\mathbb{Z}$, $\epsilon_t(\theta_0)=\epsilon_t$ a.s.  Given a realization  $X_1,\dots,X_n$ of length $n$, $\epsilon_t(\theta)$ can be approximated, for $0<t\leq n$, by $\tilde{\epsilon}_t(\theta)$ defined recursively by
\begin{equation}\label{exp-epsil-tilde}
\tilde{\epsilon}_t(\theta)=\sum_{j=0}^{t-1}\alpha_j(d)X_{t-j}-\sum_{i=1}^p\theta_i\sum_{j=0}^{t-i-1}\alpha_j(d)X_{t-i-j}+\sum_{j=1}^q\theta_{p+j}\tilde{\epsilon}_{t-j}(\theta),
\end{equation}
with $\tilde{\epsilon}_t(\theta)=X_t=0$ if $t\leq 0$.

As shown in Proposition~\ref{prop:ecart-eps} (see Section~\ref{sec:proofs}), these initial values are asymptotically
negligible and in particular it holds that
$\epsilon_t(\theta)-\tilde{\epsilon}_t(\theta)\to 0$ almost-surely as $t\to\infty$ uniformly in $\theta$.
Thus the choice of the initial values has no influence on the asymptotic properties of the model parameters estimator.\\

\noindent Let $\Theta^{*}_\kappa$ denote the compact set
$$\Theta^{*}_\kappa=\left\lbrace \theta\in\mathbb{R}^{p+q}; \text{ the roots of the polynomials } a_{\theta}(z) \text{ and } b_{\theta}(z) \text{ have modulus } \geq 1+\kappa\right\rbrace.$$
We define the set $\Theta_\kappa$ as the Cartesian product $\Theta^{*}_\kappa\times\left[ d_1,d_2\right]$, where $\kappa$ is a positive constant chosen such that $\theta_0$ belongs to $\Theta_\kappa$ and where $\left[ d_1,d_2\right]\subset ( 0,1/2)$.\\

\noindent For $n\geq 1$ and $\theta\in\Theta$, consider the function
\begin{equation}\label{Qn}
Q_n(\theta)=\frac{1}{n}\sum_{t=1}^n\tilde{\epsilon}_t^2(\theta),
\end{equation}
where $(\tilde{\epsilon}_t(\theta))_{t\in\mathbb{Z}}$ is given in \eqref{exp-epsil-tilde}. The Le Cam one-step estimator is defined, almost-surely, by
\begin{equation}\label{eq: one-step est}
\overline{\theta}_n=\theta^{*}_n-\left\lbrace\frac{\partial^2}{\partial\theta\partial\theta^\prime}Q_n\left(\theta^{*}_n\right)\right\rbrace^{-1}\frac{\partial}{\partial\theta}Q_n\left(\theta^{*}_n\right),
\end{equation}
where $\theta^{*}_n$ is the least squares estimator of parameter $\theta_0$ calculated over the first $m=[ n^\delta]$, with $1/2<\delta\leq 1$, observations $X_1,\ldots,X_m$, \textit{i.e.}

\begin{equation}\label{eq: LSE}
\theta^{*}_n=\underset{\theta\in\Theta_\kappa}{\mathrm{argmin}} \ Q_m(\theta),
\text{ where }Q_m(\theta)=\frac{1}{[ n^\delta]}\sum_{t=1}^{[ n^\delta]}\tilde{\epsilon}_t^2(\theta).
\end{equation}
We will also propose alternative estimators where the matrix $\partial^2Q_n(\theta^{*}_n)/\partial\theta\partial\theta^\prime$ takes another forms (see Remark~\ref{rmq:hatJ} and Subsection~\ref{subsec:J}).
\subsection{Asymptotic properties}\label{sec:Asym-prop}

The asymptotic properties of the least squares estimator of the parameters of weak FARIMA models have been established by \cite{BMES2019}. The authors have showed, under some regularity assumptions on the noise, the consistency and the asymptotic normality of the least squares estimator. In this subsection, we study the asymptotic behavior of the Le Cam one-step estimator $\overline{\theta}_n$. We show, under the same assumptions, that the estimator $\overline{\theta}_n$ converges not only in probability but almost-surely to the true parameter $\theta_0$ and also satisfies a central limit theorem with a similar limit variance.

To ensure the strong consistency of the Le Cam one-step estimator $\overline{\theta}_n$, we assume that the innovation process in \eqref{FARIMA} satisfies the following condition:\\

\begin{itemize}
\item[{\bf (A1):}] The process $(\epsilon_t)_{t\in\mathbb{Z}}$ is strictly stationary and ergodic.\\
\end{itemize}
Our first main result is stated in the following theorem.

\begin{thm}{(Strong consistency).}\label{th:convergence} Assume that $(X_t)_{t\in\mathbb{Z}}$ satisfies \eqref{FARIMA}. Let $(\overline{\theta}_n)_{n\geq 1}$ be the sequence of Le Cam's one-step estimators defined by \eqref{eq: one-step est}. Under Assumption \textbf{(A1)}, we have
\begin{equation*}
\overline{\theta}_n\xrightarrow[n\to \infty]{a.s.} \, \theta_0.
\end{equation*}
\end{thm}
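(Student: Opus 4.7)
The plan is to exploit the standard one-step algebra: write
\begin{equation*}
\overline{\theta}_n-\theta_0 = (\theta^{*}_n-\theta_0) - \widehat J_n^{-1}\frac{\partial Q_n}{\partial\theta}(\theta^{*}_n),
\qquad \widehat J_n := \frac{\partial^2 Q_n}{\partial\theta\partial\theta^\prime}(\theta^{*}_n),
\end{equation*}
and then do a first order Taylor expansion of the score around $\theta_0$,
\begin{equation*}
\frac{\partial Q_n}{\partial\theta}(\theta^{*}_n) = \frac{\partial Q_n}{\partial\theta}(\theta_0) + \overline J_n\,(\theta^{*}_n-\theta_0),
\qquad \overline J_n := \int_0^1\frac{\partial^2 Q_n}{\partial\theta\partial\theta^\prime}\bigl(\theta_0+s(\theta^{*}_n-\theta_0)\bigr)ds.
\end{equation*}
Substituting gives
\begin{equation*}
\overline{\theta}_n-\theta_0 = -\widehat J_n^{-1}\,\frac{\partial Q_n}{\partial\theta}(\theta_0) + \bigl(I - \widehat J_n^{-1}\overline J_n\bigr)(\theta^{*}_n-\theta_0),
\end{equation*}
and the whole task reduces to checking, almost surely, three facts: (i) $\theta^{*}_n\to\theta_0$; (ii) $\widehat J_n$ and $\overline J_n$ share the same deterministic limit $J(\theta_0)$, which is invertible; (iii) the score at the true value vanishes in the limit.

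For (i) I would invoke the BMES2019 strong consistency of the LSE: because the subsample size $m=[n^\delta]$ tends to infinity and $(X_t)_{t\in\mathbb Z}$ remains a weak FARIMA satisfying (A1), their theorem applies verbatim on $X_1,\dots,X_m$ and gives $\theta^{*}_n\to\theta_0$ almost surely. For (iii) I would write
\begin{equation*}
\frac{\partial Q_n}{\partial\theta}(\theta_0) = \frac{2}{n}\sum_{t=1}^n \tilde\epsilon_t(\theta_0)\frac{\partial\tilde\epsilon_t}{\partial\theta}(\theta_0),
\end{equation*}
use Proposition~\ref{prop:ecart-eps} to replace $\tilde\epsilon_t$ by $\epsilon_t(\theta_0)=\epsilon_t$ and its derivative up to an asymptotically negligible remainder, observe that $\partial\epsilon_t(\theta_0)/\partial\theta$ is a measurable linear function of $(\epsilon_{t-j})_{j\ge 1}$, whence the uncorrelatedness of $(\epsilon_t)$ yields $\mathbb{E}[\epsilon_t\,\partial\epsilon_t(\theta_0)/\partial\theta]=0$, and finally invoke the Birkhoff ergodic theorem under (A1) to conclude $\partial Q_n/\partial\theta(\theta_0)\to 0$ a.s.

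For (ii), the key step is a uniform strong law: on a neighbourhood $\mathcal V\subset\Theta_\kappa$ of $\theta_0$, show that
\begin{equation*}
\sup_{\theta\in\mathcal V}\Bigl\|\frac{\partial^2 Q_n}{\partial\theta\partial\theta^\prime}(\theta) - J(\theta)\Bigr\| \xrightarrow[n\to\infty]{a.s.} 0,
\quad J(\theta):=\mathbb E\!\left[\frac{\partial^2 \epsilon_t^2(\theta)}{\partial\theta\partial\theta^\prime}\right],
\end{equation*}
with $J$ continuous. Once this is available, the strong consistency of $\theta^{*}_n$ forces both $\widehat J_n$ and $\overline J_n$ to converge almost surely to $J(\theta_0)$, which is positive definite by the same standard irreducibility argument used in BMES2019 for the LSE; hence $I-\widehat J_n^{-1}\overline J_n\to 0$ and $\widehat J_n^{-1}$ stays bounded, so the two terms in the decomposition both vanish.

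The main obstacle is precisely the uniform strong law for the Hessian. Two difficulties nest here: first, the long-memory coefficients $\alpha_j(d)$ decay only polynomially and their derivatives in $d$ involve digamma-type terms, so termwise domination has to be performed carefully to control the second derivatives $\partial^2\epsilon_t/\partial\theta\partial\theta^\prime$ uniformly on the compact set $\Theta_\kappa$; second, the approximation $\tilde\epsilon_t-\epsilon_t(\theta)$ and its derivatives must be shown uniformly negligible, which is the role of Proposition~\ref{prop:ecart-eps}. Combining those estimates with ergodicity via a standard bracketing/continuity argument (dominated convergence in the ergodic theorem applied to finite-dimensional projections, plus a modulus of continuity on $\Theta_\kappa$) will deliver the uniform convergence and close the proof.
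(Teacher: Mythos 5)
Your proposal is correct in substance and shares the paper's skeleton — the one-step algebra with a Taylor expansion of the score at $\theta_0$, reduction to (i) a.s.\ consistency of the subsample LSE, (ii) convergence of the Hessians to an invertible $J(\theta_0)$, (iii) a.s.\ vanishing of $\partial Q_n(\theta_0)/\partial\theta$, where your treatment of (iii) coincides exactly with the paper's Lemma~\ref{lemme:derQn} (truncation removed via Proposition~\ref{prop:ecart-eps}, then uncorrelatedness of $\epsilon_t$ with $\partial\epsilon_t(\theta_0)/\partial\theta$ and the ergodic theorem). Where you genuinely diverge is (ii): you propose a uniform strong law $\sup_{\theta\in\mathcal V}\|\partial^2 Q_n(\theta)-J(\theta)\|\to 0$ a.s.\ on a neighbourhood, with $J(\cdot)$ continuous, via bracketing; the paper never proves uniform convergence of the Hessian. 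Its Lemma~\ref{lemme:der2Qn} instead telescopes $\partial^2Q_n(\theta_n^*)-J(\theta_0)$ into three pieces: the truncation error $J_n-J_n^*$ (Proposition~\ref{prop:ecart-eps}, Remark~\ref{rmq:ecart-deriv}, Ces\`aro and the ergodic theorem), the increment $J_n^*(\theta_n^*)-J_n^*(\theta_0)$, bounded by the mean value theorem as $\sup_{\theta}\|\partial J_n^*(\theta)/\partial\theta\|\,\|\theta_n^*-\theta_0\|$ where the averaged supremum of third derivatives has a finite ergodic limit thanks to the coefficient estimates \eqref{Coef-Gamma} and $\mathbb{E}|X_t|^2<\infty$, and the pointwise term $J_n^*(\theta_0)-J(\theta_0)$, handled by the ergodic theorem alone. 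Both routes rest on exactly the same envelope integrability; the paper's stochastic-Lipschitz argument is slightly more elementary (no uniform LLN machinery, no continuity of $\theta\mapsto J(\theta)$ needed), while yours is more standard-looking and would generalize more readily, at the cost of actually executing the bracketing step you only sketch. Two smaller points: first, \cite{BMES2019} establish consistency of the LSE \emph{in probability}, so it cannot be invoked ``verbatim'' for the a.s.\ convergence of $\theta_n^*$; the upgrade to almost-sure convergence is precisely what the paper extracts from Proposition~\ref{prop:ecart-eps} together with the argument of Theorem~1 of \cite{fz98} — routine, but it should be said, since strong consistency of $\overline{\theta}_n$ hinges on it. Second, in the paper's mean-value version of your $\overline J_n$ the intermediate points $\tilde\theta_{n,i,j}$ differ row by row, which is why the paper expands coordinatewise; your integral form avoids that bookkeeping and is unobjectionable.
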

The proof of this theorem is given in Section~\ref{sec:proofs}.\\

\noindent For the asymptotic normality of the Le Cam one-step estimator, additional assumptions are required. It is necessary to assume
that $\theta_0$ is not on the boundary of the parameter space
${\Theta_\kappa}$.\\

\begin{itemize}
\item[{\bf (A2):}] We have $\theta_0\in\overset{\circ}{\Theta_\kappa}$, where $\overset{\circ}{\Theta_\kappa}$ denotes the interior of $\Theta_\kappa$.\\
\end{itemize}

\noindent The stationary process $(\epsilon_t)_{t\in\mathbb{Z}}$ is not supposed to be an independent sequence. So one needs to control its dependency by means of its  strong mixing coefficients $\left\lbrace \alpha_{\epsilon}(h)\right\rbrace _{h\geq 0}$ defined by
$$\alpha_{\epsilon}\left(h\right)=\sup_{A\in\mathcal F_{-\infty}^t,B\in\mathcal F_{t+h}^{\infty}}\left|\mathbb{P}\left(A\cap
B\right)-\mathbb{P}(A)\mathbb{P}(B)\right|,$$
where $\mathcal F_{-\infty}^t=\sigma (\epsilon_u, u\leq t )$ and $\mathcal F_{t+h}^{\infty}=\sigma (\epsilon_u, u\geq t+h)$.

\noindent We shall need  an integrability assumption on the moments of the noise $(\epsilon_t)_{t\in\mathbb{Z}}$ and a summability condition on the strong mixing coefficients $\{\alpha_{\epsilon}(h)\}_{h\geq 0}$.\\

\begin{itemize}
\item[{\bf (A3):}] There exists an integer $\tau$ such that for some $\nu\in(0,1]$, we have $\mathbb{E}|\epsilon_t|^{\tau+\nu}<\infty$ and $\sum_{h=0}^{\infty}(h+1)^{k-2} \left\lbrace \alpha_{\epsilon}(h)\right\rbrace ^{\frac{\nu}{k+\nu}}<\infty$ for $k=1,\dots,\tau$.\\
\end{itemize}
In order to state our asymptotic normality result, we define the function
\begin{align*}
O_n(\theta)& =\frac{1}{n}\sum_{t=1}^n\epsilon_t^2(\theta),
\end{align*}
where the sequence $\left( \epsilon_t(\theta)\right)_{t\in\mathbb{Z}}$ is given by \eqref{FARIMA-th}, and we consider the following information matrices
\begin{equation*}
I(\theta)=\lim_{n\rightarrow\infty}Var\left\lbrace \sqrt{n}\frac{\partial}{\partial\theta}O_n(\theta)\right\rbrace \text{  and  } J(\theta)=\lim_{n\rightarrow\infty}\left[ \frac{\partial^2}{\partial\theta_i\partial\theta_j}O_n(\theta)\right] \text{a.s.}
\end{equation*}
The existence of these matrices and the invertibility of $J(\theta_0)$ are proved in Lemmas 16 and 18 in \cite{BMES2019} for weak FARIMA.\\

\noindent Our second main result is given in the next theorem.

\begin{thm}{(Asymptotic normality).}\label{th:asy-norm} Assume that $(X_t)_{t\in\mathbb{Z}}$ satisfies \eqref{FARIMA}. Under \textbf{(A1)}, \textbf{(A2)} and \textbf{(A3)} with $\tau=4$, the sequence $\{\sqrt{n}(\overline{\theta}_n-\theta_0)\}_{n\geq 1}$ has a limiting centered normal distribution with covariance matrix $\Omega:=J^{-1}(\theta_0)I(\theta_0)J^{-1}(\theta_0)$.
\end{thm}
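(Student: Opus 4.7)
The plan is to linearise the score $\partial Q_n/\partial\theta$ around $\theta_0$, exploiting the fact that the initial LSE $\theta^{*}_n$ computed on the first $m=[n^\delta]$ observations is $\sqrt{m}$-consistent for $\theta_0$ by the asymptotic normality result for the LSE proved in \cite{BMES2019}. By Theorem~\ref{th:convergence} and Assumption \textbf{(A2)}, $\theta^{*}_n$ is strongly consistent and eventually interior to $\Theta_\kappa$, so Taylor's formula gives
$$\frac{\partial Q_n}{\partial\theta}(\theta^{*}_n)=\frac{\partial Q_n}{\partial\theta}(\theta_0)+\tilde H_n\,(\theta^{*}_n-\theta_0),$$
for a matrix $\tilde H_n$ whose $i$-th row is $\partial^2 Q_n/\partial\theta_i\partial\theta'(\bar\theta_{n,i})$ at some point $\bar\theta_{n,i}$ on the segment $[\theta_0,\theta^{*}_n]$. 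Inserting this into \eqref{eq: one-step est} and multiplying by $\sqrt n$ produces the decomposition
$$\sqrt n(\overline{\theta}_n-\theta_0)=-\hat J_n^{-1}\sqrt n\,\frac{\partial Q_n}{\partial\theta}(\theta_0)+\bigl(I_{p+q+1}-\hat J_n^{-1}\tilde H_n\bigr)\sqrt n(\theta^{*}_n-\theta_0),$$
where $\hat J_n=\partial^2Q_n/\partial\theta\partial\theta'(\theta^{*}_n)$.

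The next step is to show that the second, bracketed term is $o_p(1)$. Applied to the subsample of length $m$, the CLT for the LSE from \cite{BMES2019} gives $\theta^{*}_n-\theta_0=O_p(m^{-1/2})=O_p(n^{-\delta/2})$, so that $\sqrt n(\theta^{*}_n-\theta_0)=O_p(n^{(1-\delta)/2})$. By a mean value argument combined with a uniform law of large numbers for the third-order derivatives of $\theta\mapsto\tilde\epsilon_t^2(\theta)$ on a compact neighborhood of $\theta_0$, the Hessian $\theta\mapsto\partial^2Q_n(\theta)/\partial\theta\partial\theta'$ is Lipschitz with an $O_p(1)$ constant, yielding $\|\tilde H_n-\hat J_n\|=O_p(n^{-\delta/2})$. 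Combined with the almost-sure convergence $\hat J_n\to J(\theta_0)$ and the invertibility of $J(\theta_0)$ (Lemma~18 in \cite{BMES2019}), this forces $I_{p+q+1}-\hat J_n^{-1}\tilde H_n=O_p(n^{-\delta/2})$, and hence the remainder is of order $O_p(n^{(1-2\delta)/2})=o_p(1)$. It is precisely at this point that the constraint $\delta>1/2$ imposed in the definition of $m$ is used.

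For the leading term, I rely on the score CLT established for the LSE in \cite{BMES2019}: under \textbf{(A1)}--\textbf{(A3)} with $\tau=4$,
$$\sqrt n\,\frac{\partial Q_n}{\partial\theta}(\theta_0)\xrightarrow[n\to\infty]{\mathcal L}\mathcal N(0,I(\theta_0)),$$
the replacement of $Q_n$ by $O_n$ being controlled by Proposition~\ref{prop:ecart-eps}. Slutsky's lemma together with $\hat J_n^{-1}\to J^{-1}(\theta_0)$ almost-surely then gives
$$\sqrt n(\overline{\theta}_n-\theta_0)\xrightarrow[n\to\infty]{\mathcal L}\mathcal N\bigl(0,\,J^{-1}(\theta_0)I(\theta_0)J^{-1}(\theta_0)\bigr),$$
which is exactly the announced $\mathcal N(0,\Omega)$ distribution.

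The main technical obstacle in this plan is the uniform Lipschitz control of the empirical Hessian $\partial^2Q_n/\partial\theta\partial\theta'$ on a neighborhood of $\theta_0$. It boils down to showing that the third-order derivatives of $\tilde\epsilon_t^2(\theta)$ are dominated by an ergodic process with finite expectation uniformly in $\theta\in\Theta_\kappa$, and is the step in which the integrability part of \textbf{(A3)} with $\tau=4$ and the summability of the strong mixing coefficients are used, through arguments parallel to those developed for the convergence of the Hessian itself in \cite{BMES2019}. The subsidiary point is to justify that the truncation error $\epsilon_t(\theta_0)-\tilde\epsilon_t(\theta_0)$ and its derivatives are negligible at the $1/\sqrt n$ scale, which is granted by Proposition~\ref{prop:ecart-eps}.
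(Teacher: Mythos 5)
Your proposal is correct and takes essentially the same route as the paper: the same Taylor expansion of $\partial Q_n/\partial\theta$ around $\theta_0$ plugged into \eqref{eq: one-step est}, with your bracketed remainder $\bigl(I-\hat J_n^{-1}\tilde H_n\bigr)\sqrt n(\theta^{*}_n-\theta_0)$ being exactly the paper's term $\{\partial^2Q_n(\theta^{*}_n)/\partial\theta\partial\theta^\prime\}^{-1}n^{\delta/2}\{\partial^2Q_n(\theta^{*}_n)/\partial\theta\partial\theta^\prime-[\partial^2Q_n(\tilde\theta_{n,i,j})/\partial\theta_i\partial\theta_j]\}\,n^{\delta/2}(\theta^{*}_n-\theta_0)\,n^{1/2-\delta}$, killed in the same way via Proposition~\ref{prop:lipschitz}, the $O_p(m^{-1/2})$ rate of the subsample LSE, and $\delta>1/2$, before concluding with the score CLT of \cite{BMES2019} and Slutsky. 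One small attribution point: the Lipschitz control of the Hessian only requires second moments of $(X_t)$ and the coefficient bounds \eqref{Coef-Gamma}, while \textbf{(A3)} with $\tau=4$ is actually consumed by the score CLT $\sqrt n\,\partial Q_n(\theta_0)/\partial\theta\Rightarrow\mathcal N(0,I(\theta_0))$, which you invoke under the full assumptions anyway, so nothing is missing.
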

The detailed proof of this result is postponed to Section~\ref{sec:proofs}.\\

\begin{rmq}\label{rmq:hatJ} The quantity $\partial^2Q_n(\theta^{*}_n)/\partial\theta\partial\theta^\prime$ in the definition of Le Cam's one-step estimator \eqref{eq: one-step est} can be replaced by $J(\theta^{*}_n)$ since the matrix $J(\theta_0)$ has an explicit expression in the framework of FARIMA models (see Subsection~\ref{subsec:J}).\\
It can also be replaced by
\begin{equation*}
\hat{J}_n(\theta^{*}_n)=\frac{2}{n}\sum_{t=1}^n\left\lbrace \frac{\partial}{\partial\theta}\tilde{\epsilon}_t\left(\theta^{*}_n\right)\right\rbrace \left\lbrace \frac{\partial}{\partial\theta^{'}}\tilde{\epsilon}_t\left(\theta^{*}_n\right)\right\rbrace .
\end{equation*}
This is due to the fact that $\hat{J}_n(\cdot)$ satisfies a stochastic Lipschitz condition similar to the one in Proposition~\ref{prop:lipschitz} and that $\hat{J}_n(\theta_n^{*})$ converges almost-surely to $J(\theta_0)$ (see Lemma \ref{lemme:der2Qn}). The ergodic theorem and the uncorrelatedness of $(\epsilon_t)_{t\in\mathbb{Z}}$ are behind the intuition of the construction of the estimator $\hat{J}_n(\theta_n^{*})$. More precisely, observe that under \textbf{(A1)}, the matrix $J(\theta_0)$ can be rewritten as 
\begin{align}\label{eq:J}
J(\theta_0)&=\lim_{n\rightarrow\infty}\left\lbrace\frac{2}{n}\sum_{t=1}^n\frac{\partial}{\partial\theta}\epsilon_t(\theta_0)\frac{\partial}{\partial\theta^{'}}\epsilon_t(\theta_0)+\frac{2}{n}\sum_{t=1}^n\epsilon_t(\theta_0)\frac{\partial^2}{\partial\theta\partial\theta^{'}}\epsilon_t(\theta_0)\right\rbrace\nonumber\\
&=2\mathbb{E}\left[ \frac{\partial}{\partial\theta}\epsilon_t(\theta_0)\frac{\partial}{\partial\theta'}\epsilon_t(\theta_0)\right]+2\mathbb{E}\left[\epsilon_t(\theta_0)\frac{\partial^2}{\partial\theta\partial\theta'}\epsilon_t(\theta_0)\right]\nonumber\\
&=2\mathbb{E}\left[ \frac{\partial}{\partial\theta}\epsilon_t(\theta_0)\frac{\partial}{\partial\theta'}\epsilon_t(\theta_0)\right] \mathrm{ a.s.}
\end{align}
\end{rmq}

\begin{rmq} Under \textbf{(A1)}, \textbf{(A2)} and {\bf (A3)} with  $\tau=4$, it can be shown (see \cite{BMES2019}, Lemma 18) that the sequence $(\mathbb{E}\left[H_1(\theta_0)H_{1+k}^{'}(\theta_0)\right])_{k\in\mathbb{Z}}$ where, for all $t\in\mathbb{Z}$, $H_t(\theta)=2\epsilon_t(\theta)\frac{\partial}{\partial\theta}\epsilon_t(\theta) =(2\epsilon_t(\theta)\frac{\partial}{\partial\theta_1}\epsilon_t(\theta),\dots,2\epsilon_t(\theta)\frac{\partial}{\partial\theta_{p+q+1}}\epsilon_t(\theta)) ^\prime$, is absolutely summable. Therefore, from the stationarity of the centered process $(H_t(\theta_0))_{t\in\mathbb{Z}}$, we have

\begin{align*}
I(\theta_0)&=\lim_{n\rightarrow\infty}\mathrm{Var}\left( \frac{1}{\sqrt{n}}\sum_{t=1}^nH_t(\theta_0)\right)\nonumber\\
&=\lim_{n\rightarrow\infty}\frac{1}{n}\sum_{t=1}^n\sum_{s=1}^n\mathrm{Cov}\left( H_t(\theta_0),H_s(\theta_0)\right)\nonumber\\
&=\lim_{n\rightarrow\infty}\frac{1}{n}\sum_{k=1-n}^{n-1}(n-\left|k\right|)\mathrm{Cov}\left( H_1(\theta_0),H_{1+k}(\theta_0)\right)\nonumber\\
&=\sum_{k=-\infty}^{\infty}\mathbb{E}\left[H_1(\theta_0)H_{1+k}^{'}(\theta_0)\right].
\end{align*}
When the noise $(\epsilon_t)_{t\in\mathbb{Z}}$ is assumed to be an iid sequence, one can use the orthogonality of $\epsilon_t$ with any linear combination of $(\epsilon_s)_{s\leq t-1}$ in particular $\partial\epsilon_t(\theta_0)/\partial\theta$ (see Subsection~\ref{prelim}) to obtain 
\begin{align*}
I(\theta_0)&=\mathbb{E}\left[H_1(\theta_0)H_{1}^{'}(\theta_0)\right]+2\sum_{k=1}^{\infty}\mathbb{E}\left[H_1(\theta_0)H_{1+k}^{'}(\theta_0)\right]\\
&=4\mathbb{E}\left[\epsilon_1^2\frac{\partial}{\partial\theta}\epsilon_1(\theta_0)\frac{\partial}{\partial\theta^\prime}\epsilon_{1}(\theta_0)\right]+4\sum_{k=1}^{\infty}\mathbb{E}\left[\epsilon_1\frac{\partial}{\partial\theta}\epsilon_1(\theta_0)\epsilon_{1+k}\frac{\partial}{\partial\theta^\prime}\epsilon_{1+k}(\theta_0)\right]\\
&=2\sigma_{\epsilon}^2J(\theta_0).
\end{align*}
Thus, the asymptotic covariance matrix in the strong FARIMA case is reduced to $\Omega_S:=2\sigma_{\epsilon}^2J^{-1}(\theta_0)$. Generally, when
the noise is not an independent sequence, this simplification can not be made and we have $I(\theta_0)\ne 2\sigma_{\epsilon}^2J(\theta_0)$.
The true asymptotic covariance matrix $\Omega=J^{-1}(\theta_0)I(\theta_0)J^{-1}(\theta_0)$ obtained
in the weak FARIMA framework can be very different from $\Omega_S$.
\end{rmq}

\noindent A key point allowing to establish the limit distribution of the one-step estimator of the parameters of the weak FARIMA model \eqref{FARIMA} is the fact that $\theta\in\Theta_\kappa\longrightarrow\partial^2Q_n(\theta)/\partial\theta\partial\theta^\prime\in\mathbb{R}^{(p+q+1)\times (p+q+1)}$ is a stochastic Lipschitz function.

\begin{prop}\label{prop:lipschitz} Assume that $(X_t)_{t\in\mathbb{Z}}$ satisfies \eqref{FARIMA}. For any $i,j\in\{1,\ldots,p+q+1\}$ and all $\theta^{(1)}, \theta^{(2)}\in\Theta_\kappa$, one has 
$$\left|\frac{\partial^2}{\partial\theta_i\partial\theta_j}Q_n\left(\theta^{(1)}\right)-\frac{\partial^2}{\partial\theta_i\partial\theta_j}Q_n\left(\theta^{(2)}\right)\right|\leq \Delta_n\left\|\theta^{(1)}-\theta^{(2)}\right\|,$$
where $\Delta_n$ is bounded in probability.
\end{prop}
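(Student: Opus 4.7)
The plan is to reduce the stochastic Lipschitz property to a uniform-in-$\theta$ bound on the third derivative of $Q_n$. Since $\Theta_\kappa$ is compact and $Q_n$ is, under the invertibility conditions defining $\Theta_\kappa$, three times continuously differentiable in $\theta$, a mean value theorem applied on the segment joining $\theta^{(1)}$ and $\theta^{(2)}$ (or, if convexity of $\Theta_\kappa$ is an issue, on a finite cover by convex neighborhoods) would give
$$\left|\frac{\partial^2 Q_n}{\partial\theta_i\partial\theta_j}(\theta^{(1)})-\frac{\partial^2 Q_n}{\partial\theta_i\partial\theta_j}(\theta^{(2)})\right|\leq (p+q+1)\,\sup_{\theta\in\Theta_\kappa}\max_k\left|\frac{\partial^3 Q_n}{\partial\theta_i\partial\theta_j\partial\theta_k}(\theta)\right|\,\|\theta^{(1)}-\theta^{(2)}\|,$$
so it would suffice to take $\Delta_n$ to be a constant times the supremum on the right-hand side and show that this random quantity is tight.

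Writing $Q_n(\theta)=n^{-1}\sum_t\tilde\epsilon_t^2(\theta)$ and applying Leibniz, the third derivative expands into a finite sum of terms of the form $n^{-1}\sum_{t=1}^n A_t^{(\alpha)}(\theta)\,B_t^{(\beta)}(\theta)$, where $A^{(\alpha)}$ and $B^{(\beta)}$ are partial derivatives of $\tilde\epsilon_t$ of orders $\alpha,\beta\ge 0$ with $\alpha+\beta\le 3$ (convention $A^{(0)}=\tilde\epsilon_t$). By Cauchy--Schwarz each such term is dominated by the product of the $L^2$-averages of its factors, and these averages are in turn controlled, uniformly in $\theta\in\Theta_\kappa$, by $n^{-1}\sum_t \sup_{\theta\in\Theta_\kappa}\big(\partial^\alpha\tilde\epsilon_t/\partial\theta^\alpha\big)^2$. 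The ergodic theorem (Assumption \textbf{(A1)}) then suggests that each such sum has an a.s. limit equal to $\mathbb{E}\sup_{\theta\in\Theta_\kappa}(\partial^\alpha\epsilon_t(\theta)/\partial\theta^\alpha)^2$; to convert the observed process $\tilde\epsilon_t$ and its derivatives to the stationary counterparts $\epsilon_t(\theta)$ one uses the asymptotic negligibility of initial values (Proposition~\ref{prop:ecart-eps}) extended to the first three derivatives. Finiteness of the limit moments then follows from the moment assumption on $(\epsilon_t)$ combined with the uniform decay of the coefficients in the AR($\infty$)/MA($\infty$) representations on $\Theta_\kappa$. Once finiteness and convergence hold, Markov's inequality gives tightness, i.e.~$\Delta_n=O_{\mathbb{P}}(1)$.

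The main obstacle will be the uniform control with respect to the long-memory parameter $d$. Because $\alpha_j(d)=\Gamma(j-d)/\{\Gamma(-d)\Gamma(j+1)\}\sim j^{-d-1}/\Gamma(-d)$, each differentiation with respect to $d$ produces an extra $\log j$ factor, so that the coefficients appearing in $\partial^k\tilde\epsilon_t/\partial d^k$ behave like $j^{-d-1}(\log j)^k$. The restriction $d\in[d_1,d_2]\subset(0,1/2)$ is precisely what guarantees that the series $\sum_j j^{-2(d_1)-2}(\log j)^{2k}|X_{t-j}|^2$ (and similar series after MA inversion) have finite expectation uniformly in $d$, giving the required uniform $L^2$ moment bounds up to order three. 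Combined with the exponential decay (in the lag, uniform on $\Theta^*_\kappa$) of the coefficients coming from inversion of $b_\theta$ and $a_\theta$, this yields the summability estimates needed to control the suprema, and the conclusion follows. These uniform derivative bounds are essentially the technical core of the FARIMA calculus developed in \cite{BMES2019}, and the argument above shows how to assemble them into the Lipschitz statement.
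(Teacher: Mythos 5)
Your proposal is correct in outline and shares the paper's skeleton (mean value theorem, Leibniz expansion of $\partial^3 Q_n/\partial\theta_i\partial\theta_j\partial\theta_k$ into products of derivatives of $\tilde\epsilon_t$ of total order at most three, coefficient decay, Markov), but the middle of your argument takes a genuinely different and heavier route than the paper's. You pass by Cauchy--Schwarz over $t$ to $L^2$-averages, invoke the ergodic theorem to identify their a.s.\ limits, and transfer from $\tilde\epsilon_t$ to the stationary $\epsilon_t(\theta)$ via an extension of Proposition~\ref{prop:ecart-eps} to third derivatives (true, since \eqref{Coef-Gamma} holds for $k=3$, but not stated in Remark~\ref{rmq:ecart-deriv} and thus needing its own justification). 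The paper does none of this: it bounds $\mathbb{E}\bigl[\sup_{\theta\in\Theta_\kappa}|T_{i,n}(\theta)|\bigr]$ directly, using the truncated representation \eqref{epsilon-tilde--} with $|\gamma^t_\ell(\theta)|\leq|\gamma_\ell(\theta)|$ and the uniform summability of $\ell^{-1-d_1}\{\log(\ell)\}^k$, so that only $\sup_t\mathbb{E}[X_t^2]<\infty$ (automatic from second-order stationarity) is needed before applying Markov's inequality. This matters for the hypotheses: the proposition assumes only \eqref{FARIMA}, whereas your ergodic-theorem step imports \textbf{(A1)}, which is not part of the statement; moreover the a.s.\ limits you compute are superfluous, since the uniform expectation bounds you yourself establish already give $\Delta_n=\mathrm{O}_{\mathbb{P}}(1)$ via Markov --- collapsing your argument at that point recovers exactly the paper's proof. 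On the other hand, your caveat about applying the mean value theorem on the segment (convexity of $\Theta_\kappa$, fixable by a finite cover by convex neighborhoods or a chaining argument) is a legitimate point that the paper's proof passes over silently, so on that detail your write-up is the more careful one.
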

The proof of this proposition is detailed in Section~\ref{sec:proofs}.\\

\subsection{Explicit computations of $J(\theta)$}\label{subsec:J}
The particular structure of FARIMA models allows an explicit calculation of the matrix $J(\theta_0)$. Thus, the use of the closed form of the limit matrix $J(\theta_0)$ in \eqref{eq: one-step est} instead of the second derivative of the function $Q_n(\cdot)$  further improves the computational performance of Le Cam's one-step estimator while maintaining the same asymptotic properties. The matrix $J(\theta)$ clearly depends on the derivative of the process $(\epsilon_t(\theta))_{t\in\mathbb{Z}}$. This derivative can be expressed as an infinite linear combination of the past and present values of the true noise $(\epsilon_t)_{t\in\mathbb{Z}}$, which subsequently gives rise to a simple calculation of $J(\theta)$ by exclusively exploiting the uncorrelatedness of the innovations process $(\epsilon_t)_{t\in\mathbb{Z}}$. Let us be more precise. Observe that from Equations \eqref{eq:J} and \eqref{deriveesecepsil}, one has
\begin{align*}
J(\theta)&=2\sigma_\epsilon^2\sum_{i\geq 1}\begin{pmatrix}
\overset{\textbf{.}}{\lambda}_{i,1}^2\left( \theta\right)&\overset{\textbf{.}}{\lambda}_{i,1}\left( \theta\right)\overset{\textbf{.}}{\lambda}_{i,2}\left( \theta\right)&\cdots&\overset{\textbf{.}}{\lambda}_{i,1}\left( \theta\right)\overset{\textbf{.}}{\lambda}_{i,p+q+1}\left( \theta\right)\\
\overset{\textbf{.}}{\lambda}_{i,2}\left( \theta\right)\overset{\textbf{.}}{\lambda}_{i,1}\left( \theta\right)&\overset{\textbf{.}}{\lambda}_{i,2}^2\left( \theta\right)&\cdots&\overset{\textbf{.}}{\lambda}_{i,2}\left( \theta\right)\overset{\textbf{.}}{\lambda}_{i,p+q+1}\left( \theta\right)\\
\vdots&\vdots&\vdots&\vdots\\
\overset{\textbf{.}}{\lambda}_{i,p+q+1}\left( \theta\right)\overset{\textbf{.}}{\lambda}_{i,1}\left( \theta\right)&\overset{\textbf{.}}{\lambda}_{i,p+q+1}\left( \theta\right)\overset{\textbf{.}}{\lambda}_{i,2}\left( \theta\right)&\cdots&\overset{\textbf{.}}{\lambda}_{i,p+q+1}^2\left( \theta\right)
\end{pmatrix}.
\end{align*}
Furthermore, for  $1\le k \le p+q+1 $, the sequence $\overset{\textbf{.}}{\lambda}_{k}(\theta)= (\overset{\textbf{.}}{\lambda}_{i,k}(\theta))_{i\geq1}$ in Equation \eqref{deriveesecepsil} is the sequence of  the coefficients in the power series of
$$\frac{\partial}{\partial \theta_k }\left (  b_\theta^{-1}(z) a_{\theta}(z)(1-z)^{d-d_0} a_{\theta_0}^{-1}(z)b_{\theta_0}(z)\right ).$$ Thus, $\overset{\textbf{.}}{\lambda}_{i,k}\left(\theta_0\right)$ is the $i-$th coefficient taken in $\theta=\theta_0$. There are three cases.
\begin{itemize}
\item[$\diamond$] $k=1,\dots,p$:  \\
Since
$$\frac{\partial}{\partial \theta_k }\left (  b_\theta^{-1}(z) a_{\theta}(z)(1-z)^{d-d_0} a_{\theta_0}^{-1}(z)b_{\theta_0}(z) \right )= -b_\theta^{-1}(z) z^k (1-z)^{d-d_0} a_{\theta_0}^{-1}(z)b_{\theta_0}(z), $$ we deduce that $\overset{\textbf{.}}{\lambda}_{i,k}\left(\theta_0\right)$ is the $i-$th coefficient of $-z^k a_{\theta_0}^{-1}(z)$.\\

\item[$\diamond$] $k=p+1,\dots,p+q$:  \\
We have
$$\frac{\partial}{\partial \theta_k } \left (b_\theta^{-1}(z) a_{\theta}(z)(1-z)^{d-d_0} a_{\theta_0}^{-1}(z)b_{\theta_0}(z)\right ) = \left  (\frac{\partial}{\partial \theta_k } b_\theta^{-1}(z)\right  ) a_{\theta}(z) (1-z)^{d-d_0} a_{\theta_0}^{-1}(z)b_{\theta_0}(z) $$ and consequently $\overset{\textbf{.}}{\lambda}_{i,k}\left(\theta_0\right)$ is the $i-$th coefficient of $(\frac{\partial}{\partial \theta_k } b_{\theta_0}^{-1}(z) ) b_{\theta_0}(z)$.\\

\item[$\diamond$] $k=p+q+1$:  \\
In this case, $\theta_k=d$ and so we have
$$\frac{\partial}{\partial \theta_k } \left (b_\theta^{-1}(z) a_{\theta}(z)(1-z)^{d-d_0} a_{\theta_0}^{-1}(z)b_{\theta_0}(z)\right ) = b_\theta^{-1}(z)a_{\theta}(z) \mathrm{ln}(1-z)(1-z)^{d-d_0} a_{\theta_0}^{-1}(z)b_{\theta_0}(z) $$ which implies that $\overset{\textbf{.}}{\lambda}_{i,k}\left(\theta_0\right)$ is the $i-$th coefficient of $\mathrm{ln}(1-z)$ which is equal to $-1/i$.
\end{itemize}
\begin{exemple} In this example, we illustrate the previous calculations in the case of weak FARIMA$(1,d,1)$ model (\textit{i.e.} when $p=q=1$ in \eqref{FARIMA} and \eqref{FARIMA-th}). This model is widely used in practice. Since the modulus of the autoregressive parameter $a_0$ and the moving-average parameter $b_0$ are assumed to be strictly less than 1, one can easily obtain that $a^{-1}(z)=(1-a_0z)^{-1}=\sum_{i\geq 0}a_0^iz^i$ and similarly $b^{-1}(z)=\sum_{i\geq 0}b_0^iz^i$. So, it can be shown that 
\begin{equation*}
\frac{\partial}{\partial\theta}\epsilon_t(\theta_0)=\begin{pmatrix}
\vspace*{0.2cm}
\frac{\partial}{\partial a}\epsilon_t(\theta_0)\\
\vspace*{0.2cm}
\frac{\partial}{\partial b}\epsilon_t(\theta_0)\\
\frac{\partial}{\partial d}\epsilon_t(\theta_0)
\end{pmatrix}=-\sum_{i\geq 1}\begin{pmatrix}
a_0^{i-1}\\
-b_0^{i-1}\\
\frac{1}{i}
\end{pmatrix}\epsilon_{t-i}
\end{equation*}
and consequently, we deduce that 
\begin{equation*}
J(\theta_0)=2\sigma_\epsilon^2\begin{pmatrix}
1/(1-a_0^2)&-1/(1-a_0b_0)&-\ln(1-a_0)/a_0\\
-1/(1-a_0b_0)&1/(1-b_0^2)&\ln(1-b_0)/b_0\\
-\ln(1-a_0)/a_0&\ln(1-b_0)/b_0&\pi^2/6
\end{pmatrix}.
\end{equation*}
This explicit expression can be used in the one-step procedure to speed it up.
\end{exemple}

\section{Numerical illustrations}\label{sec:sim}
We investigate in this section the behavior of the one-step estimator on finite sample sizes through Monte Carlo experiments. The numerical illustrations of this section are made with the open source statistical software \textsf{R}  (\cite{R20}).
\subsection{Simulation studies}
The behavior of Le Cam's one-step estimator is numerically studied for FARIMA$(1,d,1)$ model of the form
\begin{equation}\label{FARIMA-sim}
(1-L)^d(X_t-aX_{t-1})=\epsilon_t-b\epsilon_{t-1},
\end{equation}
where the unknown parameter $(a,b,d)^\prime$ takes different values. We start by comparing the asymptotic properties of the one-step estimator and the LSE in both strong and weak frameworks. For this, firstly we consider that the innovation process $(\epsilon_t)_{t\in\mathbb{Z}}$ in \eqref{FARIMA-sim} is an iid centered Gaussian process with common variance 1 (which corresponds to the strong FARIMA case) and secondly that it is defined by
\begin{equation}\label{weak-noise}
\epsilon_{t}=\eta_t^2\eta_{t-1},
\end{equation}
where  $(\eta_t)_{t\in\mathbb{Z}}$ is a sequence of iid centered Gaussian random variables with variance 1. Note that the innovation process in \eqref{weak-noise} is a weak white noise which is not a martingale difference.\\

\noindent The Figure~\ref{fig1} compares the empirical distribution of the LSE and the Le Cam one-step estimator of the memory parameter $d$ in the strong case (first column) and the weak case (second column). We simulated $M=2,000$ independent trajectories of size $n=5,000$ of Model \eqref{FARIMA-sim} with $(a,b,d)^\prime=(0.2,0.5,0.3)^\prime$ endowed first by the strong Gaussian noise and then by the weak noise \eqref{weak-noise}. We considered that $\delta=0.9$. Let us recall that the fraction $\delta$ defines the size of the sample on which the initial estimator is calculated.\\

\noindent The LSE calculated on the fraction $\delta$ of the data (see the two middle graphs) is computed faster than the LSE on the whole sample but is naturally less efficient. We can observe the similarity of the empirical distributions of the one-step estimator and the LSE on the whole sample. This perfectly illustrates the theoretical results presented in Subsection~\ref{sec:Asym-prop}.\\

\noindent In Figure \ref{fig2}, we present the empirical distribution of the Le Cam one-step estimator of the memory parameter for different values of the parameters in \eqref{FARIMA-sim} induced by the noise \eqref{weak-noise} with $n=5,000$ over the 2,000 replications. This graph highlights the adequacy of the empirical results (distributions over a finite sample size) and the theoretical results obtained in Theorem~\ref{th:asy-norm}, even when the parameter is close to the boundary of the parameter space.\\

\noindent Finally, we compare in Figure \ref{fig3} the computation time (in seconds), with respect to the sample size, of the LSE and the one-step estimator of all the parameters, with two different fractions ($\delta=0.7$ and $\delta=0.9$) and $(a,b,d)^\prime=(0.2,0.5,0.3)^\prime$. For each size $n$, we simulated 10 replications to calculate the estimators. We observe that the one-step estimator outperforms the LSE in terms of computation time. It should also be noted that taking a small fraction $\delta$ further reduces the calculation time.

\begin{figure}[H]
\begin{center}\includegraphics[width=14cm]{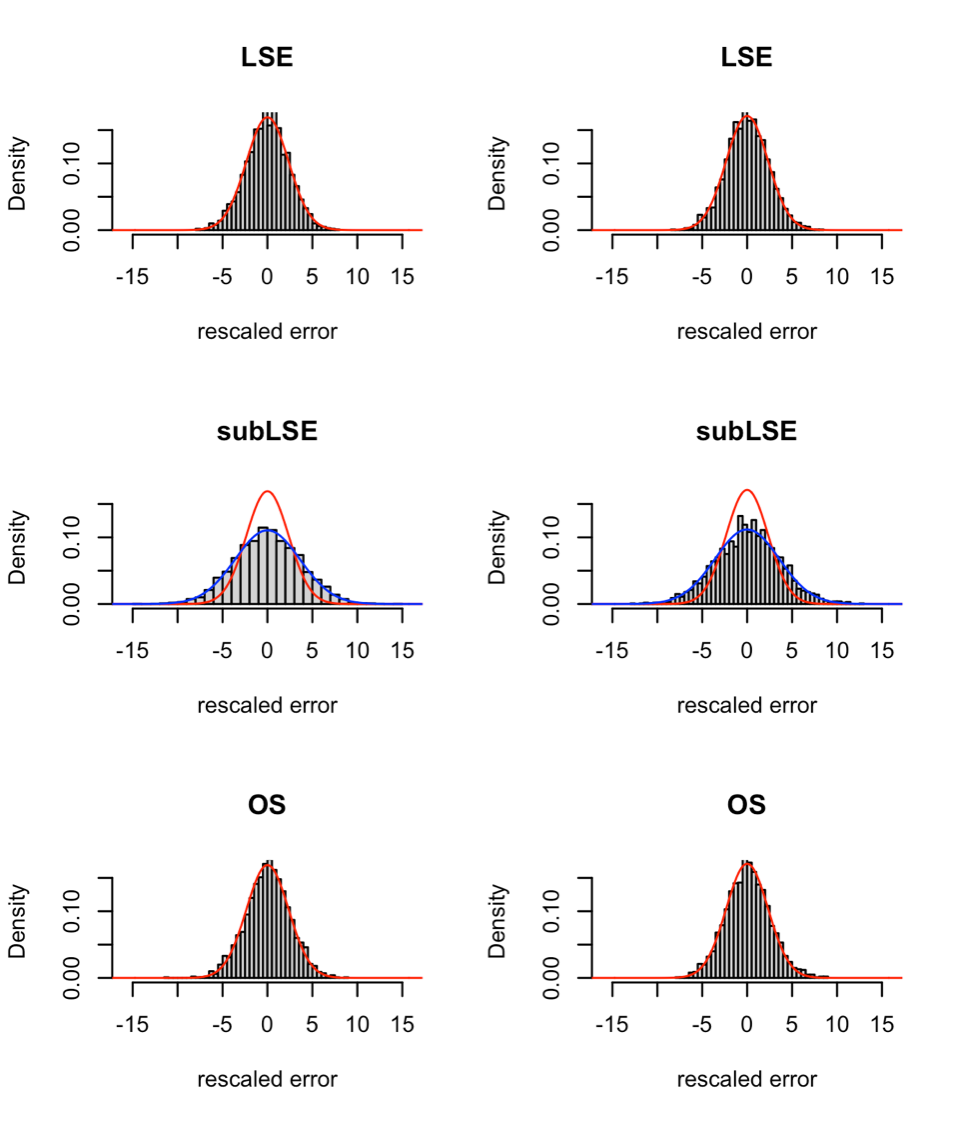}
\caption{Histograms for the $M=2,000$ Monte Carlo simulations of the rescaled statistical error of the LSE (first line), the LSE calculated on the fraction $\delta=0.9$ of the data (second line) and one-step estimator (last line) of the memory parameter for Model \eqref{FARIMA-sim} with $(a,b,d)^\prime=(0.2,0.5,0.3)^\prime$ and $n=5,000$. Superimposed red and blue lines are the theoretical centered Gaussian asymptotic distributions of the LSE and the subLSE, respectively.}
\label{fig1}
\end{center}
\end{figure}

\begin{figure}[H]
\begin{center}\includegraphics[width=14cm]{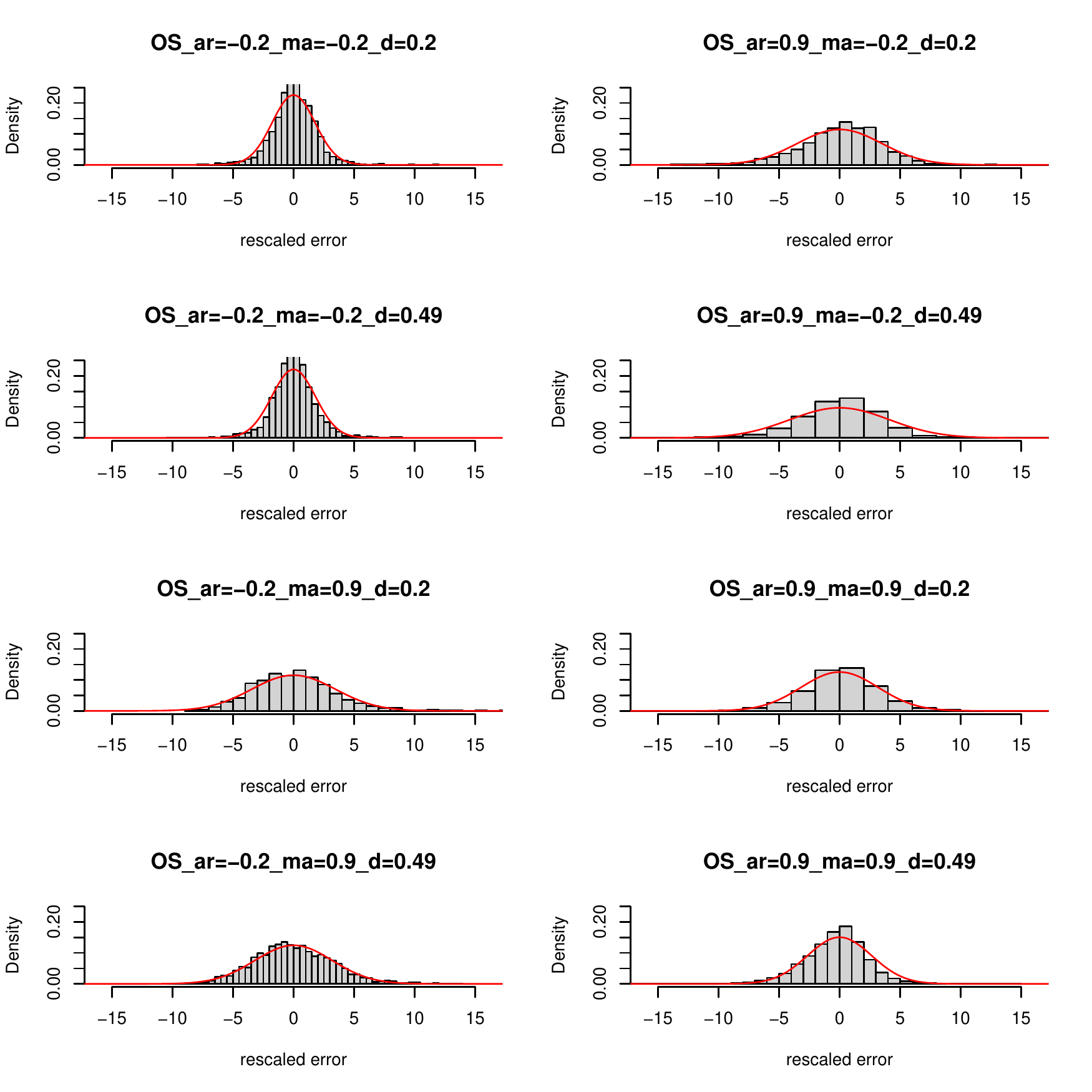}
\caption{Empirical distribution over $M=2,000$ simulations of the rescaled statistical error of the one-step estimator of the memory parameter for FARIMA$(1,d,1)$ model induced by the weak white noise \eqref{weak-noise} with $n=5,000$ and different values of the parameters.}
\label{fig2}
\end{center}
\end{figure}

\begin{figure}[H]
\begin{center}\includegraphics[width=14cm]{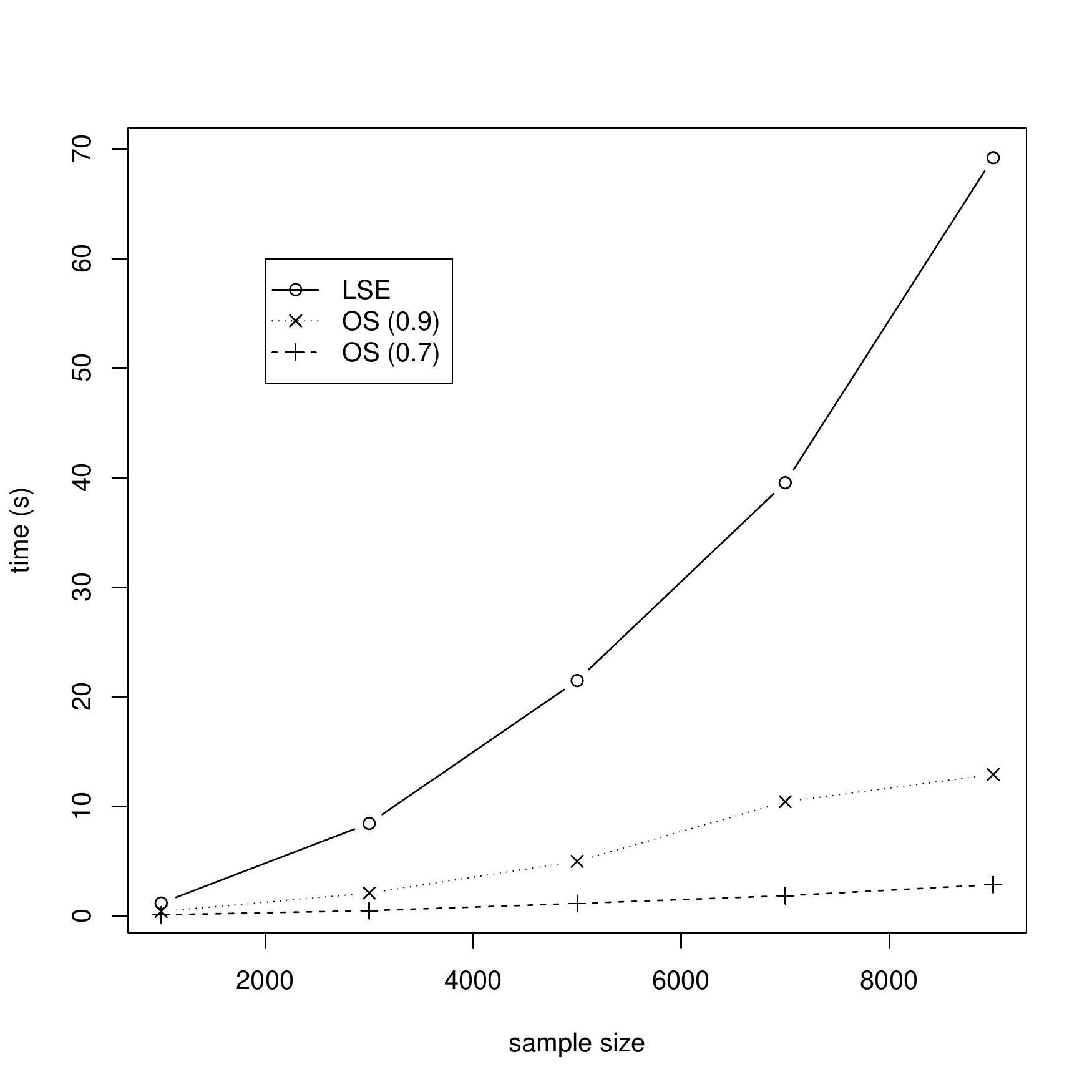}
\caption{Comparison of the computation times with respect to the sample size of the LSE and the one-step estimator of the parameters of Model \eqref{FARIMA-sim} induced by Noise \eqref{weak-noise} with $(a,b,d)^\prime=(0.2,0.5,0.3)^\prime$. For each size $n$, 10 replications are generated.}
\label{fig3}
\end{center}
\end{figure}

\subsection{Illustrative example}

We consider in this example the daily $\log$ returns of the Standard \& Poor's 500 index (S\&P~500, for short). The $\log$ returns (or simply returns) are defined by $r_t=100\log(p_t/p_{t-1})$ where $p_t$ is the price index of the S\&P~500 at time $t$. The observations of the S\&P~500 index cover the period from January 3, 1950 to February 14, 2019. The length of the series is $n=17,391$. The data can be downloaded with the \verb!R! package \verb!quantmod!.\\

The phenomenon of long memory has been widely studied for financial series. \cite{DGE1993} have shown that the positive powers of the absolute value of returns have more persistence than the returns themselves. We choose here the case of squared returns. The mean and the standard deviation of $(r_t^2)_{t\ge 1}$ are $0,9347$ and $5,0036$. As in \cite{Ling2003}, we consider the centered series $(X_t)_{t\ge 1}$ of the squared returns, that is, $X_t=r_t^2-0,9347$.The sample autocorrelations of the series $(X_t)_{t\ge 1}$ (see Figure \ref{fig5}) decrease very slowly and are approximated by the function in blue (which is not integrable on $\mathbb{R}$). This suggests that this series has a long memory. 

\begin{figure}[H]
\begin{minipage}{0.48\textwidth}
\vspace*{-0.4cm}
\includegraphics[width=7cm]{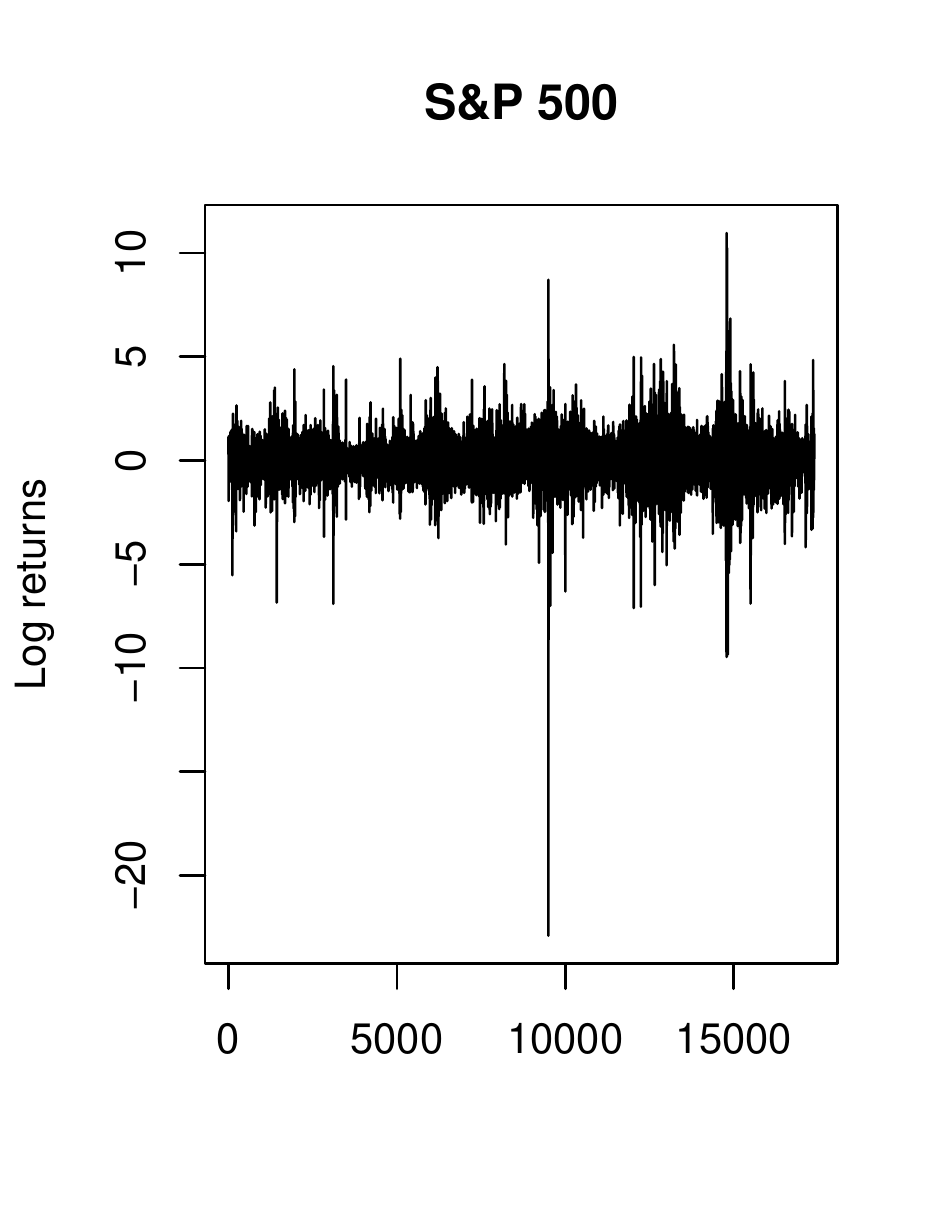}
\caption{Returns of the S\&P~500 index from January 3, 1950 to February 14, 2019.}
\label{fig4}
\end{minipage}
\hspace*{0.3cm}
\begin{minipage}{0.48\textwidth}
\includegraphics[width=7cm]{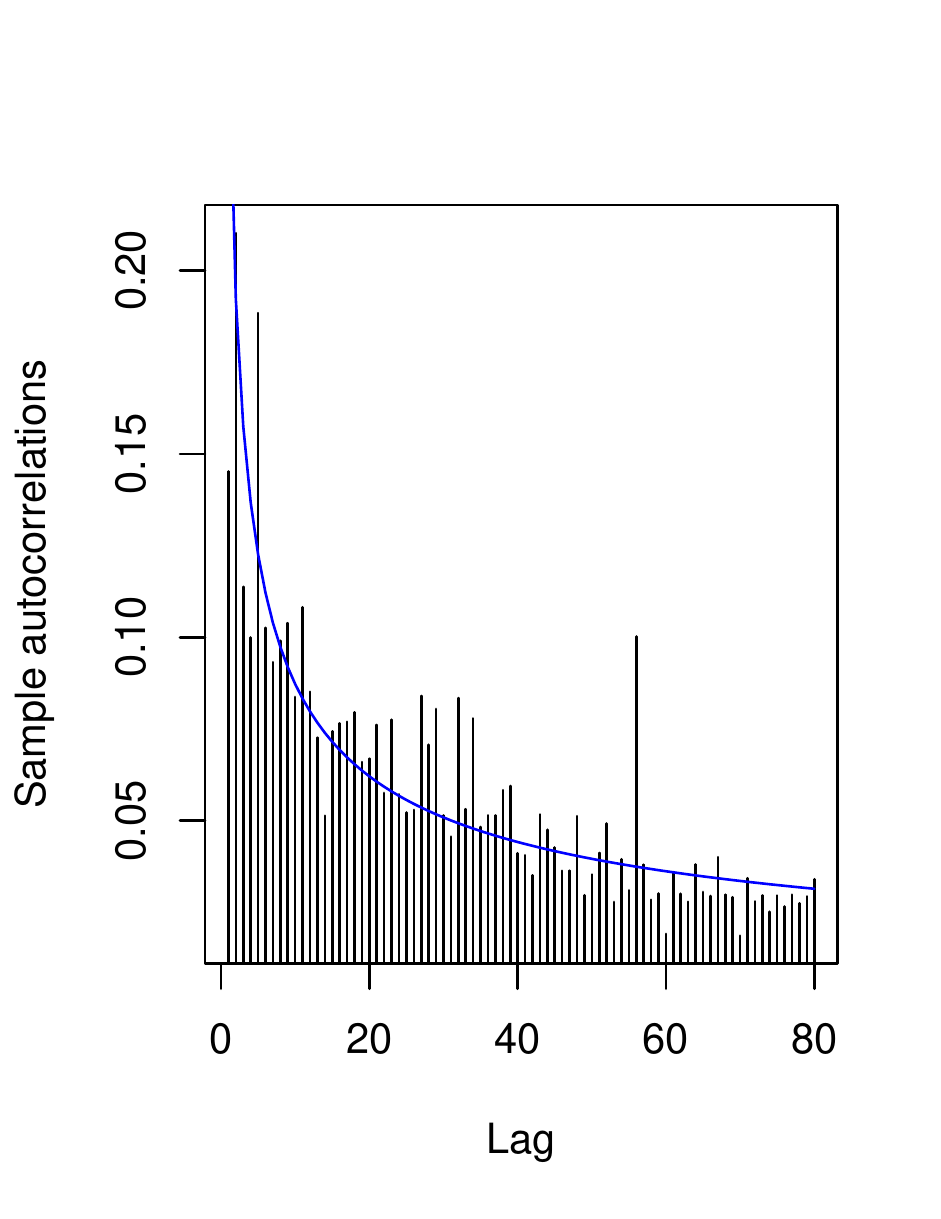}
\caption{Sample autocorrelations of squared returns of the S\&P~500 stock market index. The curve in blue is that of $x\rightarrow 0.26/x^{\textcolor{blue}{\mathbf{0.49}}}$.}
\label{fig5}
\end{minipage}
\end{figure}

It has been statistically validated in our previous work (see \cite{bes--2019}) that the time series $(X_t)_{t\ge 1}$ could be adjusted by a weak FARIMA$(1,d,1)$ model. It is worth emphasizing that the strong FARIMA fitting with the same orders is rejected for this series. The calibration of such models with the LSE is time consuming. Consequently, we propose a remote solution (API REST with the HTTP protocol) to provide fast and similar estimates with the same asymptotic properties as the LSE. In \verb!R!, we can access to this API with the \verb!httr! package using the \verb!POST! command and format our data into \verb!JSON! (package \verb!jsonlite!). We can find an example of a program at the address \url{https://www.effi-stats.fr/files/API_wFARIMA.R}.

\section{Proofs}\label{sec:proofs}

Before starting the proofs of our main results, we introduce in the next subsection some necessary results on estimations of the coefficients of formal power series that will arise in our study.

\noindent In all our proofs, $K$ is a positive constant that may vary from line to line.

\subsection{Preliminary results}\label{prelim}
We begin by recalling the following properties on power series. If for $|z|\le R$, the power series $f(z)=\sum_{i\ge 0}a_iz^i$ and  $g(z)=\sum_{i\ge 0}b_iz^i$ are well defined, then $(fg)(z)= \sum_{i\ge 0} c_iz^i$ is also well defined for $|z|\le R$ with the sequence $(c_i)_{i\ge 0}$ which is given by $c=a\ast b$ where $\ast$ denotes the convolution product between $a$ and $b$ defined by $c_i=\sum_{k=0}^i a_kb_{i-k}=\sum_{k=0}^i a_{i-k}b_{k}$.

Now we come back to the power series that arise in our context. Remind that for the true value of the parameter,
\begin{equation}\label{FF}
a_{\theta_0}(L)(1-L)^{d_0}X_t=b_{\theta_0}(L)\epsilon_t.
\end{equation}
Thanks to the assumptions on the moving average polynomials $b_\theta$ and the autoregressive polynomials $a_\theta$, the power series $a_\theta^{-1}$ and $b_\theta^{-1}$ are well defined.

Thus the functions $\epsilon_t(\theta)$ defined in \eqref{FARIMA-th}  can be written as
\begin{align}\label{epsi}
\epsilon_t(\theta) &= b^{-1}_{\theta}(L) a_{\theta}(L)(1-L)^{d}X_t \\
\label{epsi-bis}
& =b^{-1}_{\theta}(L) a_{\theta}(L)(1-L)^{d-d_0}a^{-1}_{\theta_0}(L) b_{\theta_0}(L)\epsilon_t
\end{align}
and if we denote $\gamma(\theta)=(\gamma_i(\theta))_{i\ge 0}$ the sequence of coefficients of the power series $b^{-1}_{\theta}(z) a_{\theta}(z)(1-z)^{d}$, we may write for all $t\in\mathbb Z$,
\begin{align}\label{AR-Inf}
\epsilon_t(\theta)&=\sum_{i\geq 0}\gamma_i(\theta)X_{t-i}.
\end{align}
In the same way, by \eqref{epsi} we have
\begin{align*}
X_t & = (1-L)^{-d}a^{-1}_{\theta}(L) b_{\theta}(L)\epsilon_t(\theta)
\end{align*}
and if we denote $\eta(\theta)=(\eta_i(\theta))_{i\ge 0}$ the coefficients of the power series $(1-z)^{-d}a^{-1}_{\theta}(z) b_{\theta}(z)$ one has
\begin{align}
\label{MA-Inf}
X_t & = \sum_{i\geq 0}\eta_i(\theta)\epsilon_{t-i}(\theta).
\end{align}
We strength the fact that $\gamma_0(\theta)=\eta_0(\theta)=1$ for all $\theta$. 

For large $j$, \cite{hallin} have shown that uniformly in $\theta$ the sequences  $\gamma(\theta)$ and $\eta(\theta)$
satisfy
\begin{equation}\label{Coef-Gamma}
\frac{\partial^k\gamma_j(\theta)}{\partial\theta_{i_1}\cdots\partial\theta_{i_k}}=\mathrm{O}\left( j^{-1-d}\left\lbrace \log(j)\right\rbrace ^k\right),\text{ for }k=0,1,2,3,
\end{equation}
and
\begin{equation}\label{Coef-Eta}
\frac{\partial^k\eta_j(\theta)}{\partial\theta_{i_1}\cdots\partial\theta_{i_k}}=\mathrm{O}\left( j^{-1+d}\left\lbrace \log(j)\right\rbrace ^k\right), \text{ for }k=0,1,2,3.
\end{equation}

One difficulty that has to be addressed is that \eqref{AR-Inf} includes the infinite past $(X_{t-i})_{i\ge 0}$ whereas only a finite number of observations $(X_t)_{1\leq t\leq n}$ are available to compute the estimators defined in \eqref{eq: one-step est}.
The simplest solution is truncation which amounts to setting all unobserved values equal to zero. Thus, for all $\theta\in\Theta$ and $1\le t\le n$ one defines
\begin{equation}\label{epsilon-tilde--}
\tilde{\epsilon}_t(\theta)=\sum_{i=0}^{t-1}\gamma_i(\theta)X_{t-i}= \sum_{i\ge 0} \gamma_i^t(\theta) X_{t-i},
\end{equation}
where the truncated sequence $\gamma^t(\theta)= ( \gamma_i^t(\theta))_{i\ge 0}$ is defined by
$$\gamma_i^t(\theta)=\left\{
\begin{array}{rl}
 \gamma_i(\theta) &\text{ if } \ 0\leq i\leq t-1\ , \\
0& \text{ otherwise.}
\end{array}\right.$$

In the following proposition, we show that the difference between $\epsilon_t(\theta)$ and $\tilde{\epsilon}_t(\theta)$ converges almost-surely to 0 as $t\to\infty$ and this uniformly in $\theta$. This proposition shows that the convergence of the least squares estimator $\theta^{*}_n$ in \eqref{eq: LSE} studied in \cite{BMES2019} is not only in probability but it is almost-sure when $d_0\in(0,1/2)$. This last confirmation can be easily demonstrated by following line by line the proof of Theorem 1 in \cite{fz98}.

\begin{prop}\label{prop:ecart-eps} Let $(X_t)_{t\in\mathbb{Z}}$ be the second-order stationary process given by \eqref{FARIMA}. Under the standard assumptions of invertibility and identifiability on the autoregressive polynomial $a$ and the moving-average polynomial $b$, we have almost-surely
\begin{equation}\label{conv:ecart-eps}
\lim_{t\rightarrow \infty} \sup_{\theta\in\Theta_{\delta}}\left| \epsilon_t(\theta)-\tilde{\epsilon}_t(\theta)\right|=0.
\end{equation}
\end{prop}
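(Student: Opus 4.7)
The plan is to express the difference as the tail of the AR$(\infty)$ expansion of $\epsilon_t(\theta)$ and control it by a dominated-convergence argument. Subtracting \eqref{epsilon-tilde--} from \eqref{AR-Inf}, one obtains
\begin{equation*}
\epsilon_t(\theta) - \tilde{\epsilon}_t(\theta) = \sum_{i \geq t} \gamma_i(\theta)\, X_{t-i} = \sum_{j \geq 0} \gamma_{t+j}(\theta)\, X_{-j},
\end{equation*}
which reduces the problem to bounding a weighted tail of the stationary sequence $(X_{-j})_{j\geq 0}$, uniformly in $\theta \in \Theta_\kappa$ (I read the $\Theta_\delta$ in the statement as $\Theta_\kappa$).

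The key ingredient is the uniform decay estimate \eqref{Coef-Gamma} taken with $k=0$: for $\theta \in \Theta_\kappa$, $\gamma_i(\theta) = \mathrm{O}(i^{-1-d})$ uniformly in $\theta$, and since $\Theta_\kappa$ forces $d \in [d_1,d_2]$ with $d_1 > 0$, one obtains the uniform majoration
\begin{equation*}
\sup_{\theta \in \Theta_\kappa}\left|\gamma_n(\theta)\right| \leq K\, n^{-1-d_1},\qquad n \geq 1,
\end{equation*}
with a constant $K$ independent of $n$, obtained by combining the asymptotic estimate of \cite{hallin} with continuity of $\gamma_n(\cdot)$ on the compact $\Theta_\kappa$ for the finitely many small indices. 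Consequently,
\begin{equation*}
\sup_{\theta \in \Theta_\kappa}\left|\epsilon_t(\theta) - \tilde{\epsilon}_t(\theta)\right| \leq K \sum_{j \geq 0}(t+j)^{-1-d_1}\,|X_{-j}| =: K\, S_t.
\end{equation*}

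It remains to prove $S_t \to 0$ a.s. Since $(t+j)^{-1-d_1} \leq (1+j)^{-1-d_1}$ for all $t \geq 1$, the pathwise dominating series
\begin{equation*}
W := \sum_{j\geq 0}(1+j)^{-1-d_1}\,|X_{-j}|
\end{equation*}
has $\mathbb{E}[W] \leq \mathbb{E}|X_0| \sum_{j\geq 0}(1+j)^{-1-d_1} < \infty$, using stationarity of $(X_t)_{t\in\mathbb{Z}}$, $\mathbb{E}[X_0^2] < \infty$ together with Cauchy--Schwarz, and summability of the series for $d_1 > 0$. Hence $W < \infty$ almost-surely. On this full-measure event, for each fixed $j$ the term $(t+j)^{-1-d_1}|X_{-j}|$ tends to $0$ as $t \to \infty$ while being dominated by the summable sequence $(1+j)^{-1-d_1}|X_{-j}|$, so a pathwise application of the dominated convergence theorem (with counting measure in $j$) delivers $S_t \to 0$ a.s., which proves \eqref{conv:ecart-eps}.

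The main obstacle I anticipate is the passage from the asymptotic estimate \eqref{Coef-Gamma} to a genuine envelope $|\gamma_n(\theta)| \leq K n^{-1-d_1}$ valid for \emph{all} $n \geq 1$ and uniform in $\theta \in \Theta_\kappa$; this is what allows us to build the dominating random variable $W$ and to pass to the limit under the sum. Once that uniform bound is available, the remainder of the argument is a standard application of dominated convergence, and no ergodicity or higher-moment assumption on $(X_t)_{t\in\mathbb{Z}}$ beyond second-order stationarity is required.
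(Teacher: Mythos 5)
Your proof is correct, and it closes the argument by a different mechanism than the paper, though the skeleton coincides: both proofs start from the same tail decomposition $\epsilon_t(\theta)-\tilde{\epsilon}_t(\theta)=\sum_{j\geq 0}\gamma_{t+j}(\theta)X_{-j}$ and the same uniform envelope $\sup_{\theta\in\Theta_\kappa}|\gamma_n(\theta)|\leq K n^{-1-d_1}$ drawn from \eqref{Coef-Gamma}. Where you diverge is the final limiting step. The paper invokes the criterion that almost-sure convergence is equivalent to $\sup_{k\geq t}\sup_{\theta}|\epsilon_k(\theta)-\tilde{\epsilon}_k(\theta)|\to 0$ in probability, and then applies Markov's inequality with the first-moment bound $\sup_t\mathbb{E}|X_t|<\infty$, which yields the quantitative tail estimate $\mathbb{P}(\cdot>\beta)\leq K\beta^{-1}(t-1)^{-d_1}$ -- so the paper's route buys an explicit rate of decay for the exceedance probability. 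You instead build the $t$-independent integrable majorant $W=\sum_{j\geq 0}(1+j)^{-1-d_1}|X_{-j}|$, conclude $W<\infty$ almost-surely from $\mathbb{E}[W]<\infty$, and finish by pathwise dominated convergence in $j$; this is more direct, dispenses with the sup-in-probability criterion, and makes transparent that only $\mathbb{E}|X_0|<\infty$ (which you correctly extract from second-order stationarity via Cauchy--Schwarz) is needed. Two further points in your favor: you explicitly flagged and patched the passage from the asymptotic estimate $\gamma_j(\theta)=\mathrm{O}(j^{-1-d})$, valid for large $j$ uniformly in $\theta$, to a global envelope for all $n\geq 1$, using continuity of $\gamma_n(\cdot)$ on the compact $\Theta_\kappa$ for the finitely many remaining indices -- a step the paper's proof uses silently; and your reading of $\Theta_\delta$ in the statement as $\Theta_\kappa$ is the right interpretation of a notational slip in the paper.
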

\begin{proof}
From \eqref{AR-Inf} and \eqref{epsilon-tilde--},  it can be readily shown that for all $\theta\in\Theta_{\delta}$ and any $t\in\mathbb{Z}$, 
\begin{align*}
\epsilon_t(\theta)-\tilde{\epsilon}_t(\theta)&=\sum_{j\geq 0}\gamma_j(\theta)X_{t-j}-\sum_{j=0}^{t-1}\gamma_j(\theta)X_{t-j}=\sum_{j\geq t}\gamma_j(\theta)X_{t-j}=\sum_{k\geq 0}\gamma_{t+k}(\theta)X_{-k}.
\end{align*}
Recall that for any sequence $(Y_n)_{n\geq 0}$ of random variables it holds that
$$Y_n\underset{n\to\infty}{\overset{\text{a.s.}}{\longrightarrow}}Y\Leftrightarrow\sup_{k\geq n}\left|Y_k-Y\right|\underset{n\to\infty}{\overset{\mathbb{P}}{\longrightarrow}}0.$$

\noindent Hence $\sup_{\theta\in\Theta_{\delta}}\left| \epsilon_t(\theta)-\tilde{\epsilon}_t(\theta)\right|$ converges almost-surely to $0$ as soon as $$\sup_{k\geq t}\sup_{\theta\in\Theta_{\delta}}\left| \epsilon_k(\theta)-\tilde{\epsilon}_k(\theta)\right|$$  converges in probability to $0$. In view of \eqref{Coef-Gamma}, one has for all $\beta>0$ and for large $t$,
\begin{align*}
\mathbb{P}\left( \sup_{k\geq t}\sup_{\theta\in\Theta_{\delta}}\left| \epsilon_k(\theta)-\tilde{\epsilon}_k(\theta)\right|>\beta\right) &=\mathbb{P}\left( \sup_{k\geq t}\sup_{\theta\in\Theta_{\delta}}\left| \sum_{j\geq 0}\gamma_{k+j}(\theta)X_{-j}\right|>\beta\right)\\
&\leq\mathbb{P}\left(\sum_{j\geq 0}\sup_{k\geq t}\sup_{\theta\in\Theta_{\delta}}\left| \gamma_{k+j}(\theta)\right|\left|X_{-j}\right|>\beta\right)\\
&\leq \frac{K}{\beta}\left( \sup_{t\in\mathbb{Z}}\mathbb{E} \left|X_t\right|\right )\sum_{j\geq 0}\left( \frac{1}{t+j}\right)^{1+d_1}
\\ &\leq \frac{K\mathrm{Var}(X_1)}{\beta d_1}(t-1)^{-d_1}\underset{t\to\infty}{\longrightarrow}0,
\end{align*}
which completes the proof of the convergence in \eqref{conv:ecart-eps}.
\end{proof}
\begin{rmq}\label{rmq:ecart-deriv} Since, for large $j$, $\partial\gamma_j(\theta)/\partial\theta_{k_1}=\mathrm{O}(j^{-1-d}\log(j))$ and $\partial^2\gamma_j(\theta)/\partial\theta_{k_1}\theta_{k_2}=\mathrm{O}(j^{-1-d}\{\log(j)\}^2)$, this last proposition remains valid for the first and second derivatives of $\epsilon_t(\theta)$. Following the same arguments developed in the proof of Proposition~\ref{prop:ecart-eps}, we have, almost-surely and for any $i,j\in\{1,\ldots,p+q+1\}$,
\begin{equation*}
\lim_{t\rightarrow \infty} \sup_{\theta\in\Theta_{\delta}}\left| \frac{\partial\epsilon_t(\theta)}{\partial\theta_i}-\frac{\partial\tilde{\epsilon}_t(\theta)}{\partial\theta_i}\right|=0
\end{equation*}
and
\begin{equation*}
\lim_{t\rightarrow \infty} \sup_{\theta\in\Theta_{\delta}}\left| \frac{\partial^2\epsilon_t(\theta)}{\partial\theta_i\partial\theta_j}-\frac{\partial^2\tilde{\epsilon}_t(\theta)}{\partial\theta_i\partial\theta_j}\right|=0.
\end{equation*}
\end{rmq}

Since our assumptions are made on the noise in $(\ref{FARIMA})$, it will be useful to express the random variables $\epsilon_t(\theta)$ and its partial derivatives with respect to $\theta$, as a function of $(\epsilon_{t-i})_{i\ge 0}$.

From \eqref{epsi-bis}, there exists a sequence $\lambda(\theta)=(\lambda_i(\theta))_{i\geq0}$ such that
\begin{equation}\label{epsil-th}
\epsilon_t(\theta)=\sum_{i=0}^{\infty}\lambda_i\left( \theta\right) \epsilon_{t-i}
\end{equation}
where $\lambda(\theta)$ is given by the sequence of the coefficients of the power series $b^{-1}_{\theta}(z) a_{\theta}(z)(1-z)^{d-d_0}a^{-1}_{\theta_0}(z) b_{\theta_0}(z)$. Consequently $\lambda(\theta) = \gamma(\theta)\ast \eta(\theta_0)$ or, equivalently,
\begin{align}\label{Coef-lambda}
\lambda_i( \theta)& =\sum_{j=0}^i\gamma_j(\theta)\eta_{i-j}(\theta_0).
\end{align}
As in \cite{hualde2011}, it can be shown using Stirling's approximation that there exists a positive constant $K$ such that 
\begin{equation}\label{eqasym-lambda}
\sup_{\theta\in\Theta_{\delta}}\left| \lambda_i(\theta)\right| \leq K\sup_{d\in[d_1,d_2]} i^{-1-(d-d_0)}\leq K i^{-1-(d_1-d_0)} \ .
\end{equation}

\noindent Equation \eqref{epsil-th} and Inequality \eqref{eqasym-lambda} imply that for all $\theta\in\Theta$ the random variable $\epsilon_t(\theta)$ belongs to $\mathbb{L}^2$, that 
the sequence $(\epsilon_t(\theta))_{t\in\mathbb{Z}}$ is an ergodic sequence and that for all $t\in\mathbb{Z}$ the function $\epsilon_t(\cdot)$ is a continuous function.
We proceed in the same way as regard to the derivatives of $\epsilon_t(\theta)$. More precisely, for any $\theta\in\Theta$, $t\in\mathbb{Z}$ and $1\le k,l \le p+q+1$ there exists sequences $\overset{\textbf{.}}{\lambda}_{k}(\theta)= (\overset{\textbf{.}}{\lambda}_{i,k}(\theta))_{i\geq1}$
and $\overset{\textbf{..}}{\lambda}_{k,l}(\theta)= (\overset{\textbf{..}}{\lambda}_{i,k,l}(\theta))_{i\geq1}$ such that
\begin{align}
\frac{\partial\epsilon_t(\theta)}{\partial\theta_k}&=\sum_{i=1}^{\infty}\overset{\textbf{.}}{\lambda}_{i,k}\left( \theta\right) \epsilon_{t-i} 
 \label{deriveesecepsil}\\
\frac{\partial^2\epsilon_t(\theta)}{\partial\theta_k\partial\theta_{l}}& =\sum_{i=1}^{\infty}\overset{\textbf{..}}{\lambda}_{i,k,l}\left( \theta\right) \epsilon_{t-i} .\label{deriveesecepsil1}
\end{align}
Of course it holds that $\overset{\textbf{.}}{\lambda}_{k}(\theta)=\frac{\partial\gamma(\theta)}{\partial\theta_k}\ast\eta(\theta_0)$ and
$\overset{\textbf{..}}{\lambda}_{k,l}( \theta)=\frac{\partial^2\gamma(\theta)}{\partial\theta_k\partial\theta_{l}}\ast \eta(\theta_0)$.

Similarly, we have
\begin{align}\label{epsiltilde-th}
\tilde{\epsilon}_t(\theta)& =\sum_{i=0}^{\infty}\lambda_i^t\left( \theta\right) \epsilon_{t-i}, \\
\frac{\partial\tilde{\epsilon}_t(\theta)}{\partial\theta_k}& =\sum_{i=1}^{\infty}\overset{\textbf{.}}{\lambda}_{i,k}^t\left( \theta\right) \epsilon_{t-i},
\label{deriveesecepsiltilde} \\
\frac{\partial^2\tilde{\epsilon}_t(\theta)}{\partial\theta_k\partial\theta_{l}}& =\sum_{i=1}^{\infty}\overset{\textbf{..}}{\lambda}_{i,k,l}^t\left( \theta\right) \epsilon_{t-i},
\end{align}
where $\lambda^t(\theta) = \gamma^t(\theta)\ast \eta(\theta_0)$, $\overset{\textbf{.}}{\lambda}^t_{k}(\theta)=\frac{\partial\gamma^t(\theta)}{\partial\theta_k}\ast\eta(\theta_0)$ and
$\overset{\textbf{..}}{\lambda}^t_{k,l}( \theta)=\frac{\partial^2\gamma^t(\theta)}{\partial\theta_k\partial\theta_{l}}\ast \eta(\theta_0)$.

\subsection{Proof of Theorem~\ref{th:convergence}}
We use \eqref{eq: one-step est} and a Taylor expansion of the function $\partial Q_n(\cdot)/ \partial\theta$ around $\theta_0$ to obtain 
\begin{align}\label{eq:tay}
\overline{\theta}_n-\theta_0&=\left(\theta^{*}_n-\theta_0\right)-\left\lbrace\frac{\partial^2}{\partial\theta\partial\theta^\prime}Q_n\left(\theta^{*}_n\right)\right\rbrace^{-1}\left\lbrace\frac{\partial}{\partial\theta}Q_n\left(\theta_0\right)+\left[\frac{\partial^2}{\partial\theta_i\partial\theta_j}Q_n\left(\tilde{\theta}_{n,i,j}\right)\right]\left(\theta^{*}_n-\theta_0\right)\right\rbrace,
\end{align}
where the $\tilde{\theta}_{n,i,j}$'s are between $\theta_n^{*}$ and $\theta_0$. In the two following lemmas, we show respectively the almost-sure convergence of $\partial^2 Q_n(\theta^{*}_n)/\partial\theta\partial\theta^\prime$ to $J(\theta_0)$ and that of $\partial Q_n(\theta_0)/\partial\theta$ to 0.
\begin{lemme} \label{lemme:der2Qn}Under the assumptions of Theorem~\ref{th:convergence}, we have almost-surely
\begin{equation*}
\lim_{n\to\infty}\frac{\partial^2}{\partial\theta\partial\theta^\prime}Q_n\left(\theta^{*}_n\right)=J(\theta_0).
\end{equation*}
\end{lemme}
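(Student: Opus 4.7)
The plan is to reduce the problem to evaluating $\partial^2Q_n/\partial\theta\partial\theta'$ at the deterministic point $\theta_0$, and then to identify the almost-sure limit through the ergodic theorem. Concretely, I would split
\begin{equation*}
\frac{\partial^2Q_n(\theta_n^{*})}{\partial\theta\partial\theta'}-J(\theta_0) = A_n + B_n ,
\end{equation*}
where $A_n=\partial^2Q_n(\theta_n^{*})/\partial\theta\partial\theta'-\partial^2Q_n(\theta_0)/\partial\theta\partial\theta'$ and $B_n=\partial^2Q_n(\theta_0)/\partial\theta\partial\theta'-J(\theta_0)$, and treat the two pieces separately.

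For $A_n$, the stochastic Lipschitz estimate of Proposition~\ref{prop:lipschitz} yields $\|A_n\|\leq\Delta_n\|\theta_n^{*}-\theta_0\|$ with $\Delta_n$ tight. Since $\theta_n^{*}$ is the LSE computed on the $m=[n^\delta]$ first observations, the strong consistency of the LSE for weak FARIMA models (the almost-sure strengthening, already mentioned right after Proposition~\ref{prop:ecart-eps}) implies $\theta_n^{*}\to\theta_0$ a.s. as $n\to\infty$, hence as $m\to\infty$. Combined with the tightness of $\Delta_n$, this gives $A_n\to 0$ in probability; to upgrade to an almost-sure statement one may trace back the proof of Proposition~\ref{prop:lipschitz} and notice that $\Delta_n$ is essentially a supremum over $\Theta_\kappa$ of ergodic averages of integrable stationary sequences built from $(\epsilon_t)_{t\in\mathbb Z}$, hence almost-surely bounded.

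For $B_n$, I would first use Proposition~\ref{prop:ecart-eps} together with Remark~\ref{rmq:ecart-deriv} to replace the truncated residuals and their derivatives by the infinite-sum counterparts $\epsilon_t(\theta_0)$, $\partial_{\theta_i}\epsilon_t(\theta_0)$ and $\partial^2_{\theta_i\theta_j}\epsilon_t(\theta_0)$; a standard Cesàro argument together with the uniform-in-$\theta$ bounds on $\Theta_\kappa$ then gives
\begin{equation*}
\frac{\partial^2Q_n(\theta_0)}{\partial\theta\partial\theta'}-\frac{\partial^2O_n(\theta_0)}{\partial\theta\partial\theta'}\xrightarrow[n\to\infty]{\mathrm{a.s.}}0 .
\end{equation*}
It remains to analyze
\begin{equation*}
\frac{\partial^2O_n(\theta_0)}{\partial\theta_i\partial\theta_j}=\frac{2}{n}\sum_{t=1}^n\frac{\partial\epsilon_t(\theta_0)}{\partial\theta_i}\frac{\partial\epsilon_t(\theta_0)}{\partial\theta_j}+\frac{2}{n}\sum_{t=1}^n\epsilon_t(\theta_0)\frac{\partial^2\epsilon_t(\theta_0)}{\partial\theta_i\partial\theta_j} .
\end{equation*}
Under \textbf{(A1)}, both summands are Cesàro means of strictly stationary and ergodic sequences (ergodicity being inherited through the measurable functionals of $(\epsilon_s)_{s\leq t}$ supplied by \eqref{deriveesecepsil} and \eqref{deriveesecepsil1}), so Birkhoff's theorem produces the corresponding expectations as almost-sure limits. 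The first yields $2\mathbb{E}[\partial_{\theta_i}\epsilon_t(\theta_0)\partial_{\theta_j}\epsilon_t(\theta_0)]$; for the second, expanding $\partial^2_{\theta_i\theta_j}\epsilon_t(\theta_0)=\sum_{k\geq 1}\overset{\textbf{..}}{\lambda}_{k,i,j}(\theta_0)\epsilon_{t-k}$, the summability of the coefficients (from \eqref{Coef-Gamma}) together with the uncorrelatedness of $(\epsilon_t)_{t\in\mathbb Z}$ gives $\mathbb{E}[\epsilon_t(\theta_0)\partial^2_{\theta_i\theta_j}\epsilon_t(\theta_0)]=0$. By \eqref{eq:J} this identifies the limit as $J(\theta_0)$, so $B_n\to 0$ a.s.

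The main technical obstacle is the almost-sure control of $A_n$: Proposition~\ref{prop:lipschitz} only provides $\Delta_n=O_{\mathbb P}(1)$, so upgrading $\Delta_n\|\theta_n^{*}-\theta_0\|$ to an $o_{\mathrm{a.s.}}(1)$ term requires revisiting the construction of $\Delta_n$ and bounding it by an almost-surely finite supremum of ergodic averages, using the polynomial decay rates \eqref{Coef-Gamma}--\eqref{Coef-Eta}. The remaining steps — the truncation error, the application of Birkhoff's theorem, and the vanishing of the cross term — are comparatively routine once the preliminaries of Subsection~\ref{prelim} are in place.
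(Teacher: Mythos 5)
Your argument is correct, but it is organized differently from the paper's. The paper splits $\frac{\partial^2}{\partial\theta\partial\theta'}Q_n(\theta_n^{*})-J(\theta_0)$ into \emph{three} terms, swapping the truncated Hessian $J_n$ for the exact Hessian $J_n^{*}=\frac{\partial^2}{\partial\theta\partial\theta'}O_n$ \emph{uniformly in $\theta$ first} (via Proposition~\ref{prop:ecart-eps}, Remark~\ref{rmq:ecart-deriv}, Cauchy--Schwarz and Ces\`aro), and only then moving the evaluation point from $\theta_n^{*}$ to $\theta_0$ by a mean-value argument applied to $J_n^{*}$; the point of this ordering is that the resulting Lipschitz-type envelopes $\sup_{\theta}\left\|\frac{\partial}{\partial\theta}\left(\frac{\partial\epsilon_t}{\partial\theta_i}\frac{\partial\epsilon_t}{\partial\theta_j}\right)\right\|$ and $\sup_{\theta}\left\|\frac{\partial}{\partial\theta}\left(\epsilon_t\frac{\partial^2\epsilon_t}{\partial\theta_i\partial\theta_j}\right)\right\|$ are genuinely stationary, ergodic and integrable, so the ergodic theorem delivers an almost-surely bounded Lipschitz constant directly, with no $O_{\mathbb P}(1)$-to-a.s.\ upgrade needed. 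You instead move the evaluation point first, at the level of the truncated $Q_n$, by invoking Proposition~\ref{prop:lipschitz} — which, as you correctly flag, only gives $\Delta_n=O_{\mathbb P}(1)$ and hence convergence in probability of your $A_n$. Your proposed repair does go through: in the proof of Proposition~\ref{prop:lipschitz} each truncated quantity such as $\sup_{\theta\in\Theta_\kappa}\left|\frac{\partial\tilde\epsilon_t(\theta)}{\partial\theta_k}\right|$ is dominated termwise by the \emph{stationary} envelope $\sum_{\ell\geq 1}K\,\ell^{-1-d_1}\log(\ell)\,|X_{t-\ell}|$ (and analogously for the second and third derivatives, since \eqref{Coef-Gamma} holds up to $k=3$), the products of two such envelopes are integrable by the computation in the paper, and Birkhoff's theorem then makes the Ces\`aro averages — hence $\Delta_n$ — almost-surely bounded; note that one dominates rather than compares, so no third-derivative extension of Remark~\ref{rmq:ecart-deriv} is required. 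What each route buys: the paper's ordering makes the a.s.\ control automatic but forces the truncation swap to be done uniformly in $\theta$ (it is needed at the random point $\theta_n^{*}$), whereas your two-term split reuses an existing proposition and needs the truncation error only at the fixed point $\theta_0$, at the price of reopening Proposition~\ref{prop:lipschitz} to strengthen its conclusion. Your treatment of $B_n$ — truncation swap, Birkhoff, and killing the cross term $\mathbb{E}\left[\epsilon_t\frac{\partial^2}{\partial\theta_i\partial\theta_j}\epsilon_t(\theta_0)\right]=0$ via \eqref{deriveesecepsil1} and uncorrelatedness, identifying the limit through \eqref{eq:J} — coincides with the paper's handling of its third term.
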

\begin{proof}
For any $\theta\in\Theta$, let
\begin{align*}
{J}_n(\theta)&=\frac{\partial^2}{\partial\theta\partial\theta^{'}}Q_n\left( \theta\right)=\frac{2}{n}\sum_{t=1}^n\left\lbrace \frac{\partial}{\partial\theta}\tilde{\epsilon}_t\left(\theta\right)\right\rbrace \left\lbrace \frac{\partial}{\partial\theta^{'}}\tilde{\epsilon}_t\left(\theta\right)\right\rbrace+\frac{2}{n}\sum_{t=1}^n\tilde{\epsilon}_t(\theta)\frac{\partial^2}{\partial\theta\partial\theta^{'}}\tilde{\epsilon}_t(\theta),
\end{align*}
and
\begin{align*}
J_n^{*}(\theta)&=\frac{\partial^2}{\partial\theta\partial\theta^{'}}O_n\left( \theta\right)=\frac{2}{n}\sum_{t=1}^n\left\lbrace \frac{\partial}{\partial\theta}\epsilon_t\left(\theta\right)\right\rbrace \left\lbrace \frac{\partial}{\partial\theta^{'}}\epsilon_t\left(\theta\right)\right\rbrace+\frac{2}{n}\sum_{t=1}^n\epsilon_t(\theta)\frac{\partial^2}{\partial\theta\partial\theta^{'}}\epsilon_t(\theta).
\end{align*}
It is clear that for any $i,j\in\{1,\ldots,p+q+1\}$,
\begin{align}\label{ineq:J}
\left|\frac{\partial^2}{\partial\theta_i\partial\theta_j}Q_n\left( \theta^*_n\right)-J(\theta_0)(i,j)\right|&\leq \left|{J}_{n}(\theta^*_n)(i,j)-J_{n}^{*}(\theta^*_{n})(i,j)\right|\nonumber\\
&\quad+\left|J_{n}^{*}(\theta^*_n)(i,j)-J_{n}^{*}(\theta_0)(i,j)\right|+\left|J_{n}^{*}(\theta_0)(i,j)-J(\theta_0)(i,j)\right|.
\end{align}
So it is enough to show that the three terms in the right hand side of \eqref{ineq:J} converge almost-surely to $0$ when $n$ tends to infinity.
The random variable $\epsilon_t$ is uncorrelated with ${\partial\epsilon_t(\theta_0)}/{\partial\theta}$ and ${\partial^2\epsilon_t(\theta_0)}/{\partial\theta\partial\theta}$ (this is due to \eqref{deriveesecepsil} and \eqref{deriveesecepsil1} and the non correlation of the innovation process $(\epsilon_t)_{t\in\mathbb{Z}}$). Thus, by the ergodicity of process $(\epsilon_t)_{t\in\mathbb{Z}}$ assumed in \textbf{(A1)}, we have
\begin{equation*}
J_n^{*}(\theta_0)\underset{n\to\infty}{\overset{\text{a.s.}}{\longrightarrow}}2\mathbb{E}\left[ \frac{\partial}{\partial\theta}\epsilon_t(\theta_0)\frac{\partial}{\partial\theta^{'}}\epsilon_t(\theta_0)\right]=J(\theta_0).
\end{equation*}
Let us now show that the term $|J_{n}^{*}(\theta^*_{n})(i,j)-J_{n}^{*}(\theta_0)(i,j)|$ converges almost-surely to 0.

In view of  \eqref{AR-Inf} and \eqref{Coef-Gamma}, one successively has
\begin{align*}
\sup_{\theta\in\Theta_{\delta}}&\left\|\frac{\partial}{\partial\theta}\left(\frac{\partial}{\partial\theta_i}\epsilon_t(\theta)\frac{\partial}{\partial\theta_j}\epsilon_t(\theta) \right)\right\|\\
&=\sup_{\theta\in\Theta_{\delta}}\left\|\frac{\partial}{\partial\theta}\left\lbrace \left(\sum_{k_1\geq 1}\frac{\partial}{\partial\theta_i}\gamma_{k_1}(\theta)X_{t-k_1}\right) \left( \sum_{k_2\geq 1}\frac{\partial}{\partial\theta_j}\gamma_{k_2}(\theta)X_{t-k_2}\right)\right\rbrace \right\|\\
&=\sup_{\theta\in\Theta_{\delta}}\left\|\frac{\partial}{\partial\theta}\left(\sum_{k_1,k_2\geq 1}\frac{\partial}{\partial\theta_i}\gamma_{k_1}(\theta)\frac{\partial}{\partial\theta_j}\gamma_{k_2}(\theta)X_{t-k_1}X_{t-k_2}\right)\right\|\\
&\leq \sup_{\theta\in\Theta_{\delta}}\left\|\sum_{k_1,k_2\geq 1}\left(\frac{\partial}{\partial\theta}\frac{\partial}{\partial\theta_i}\gamma_{k_1}(\theta)\right)\frac{\partial}{\partial\theta_j}\gamma_{k_2}(\theta)X_{t-k_1}X_{t-k_2}\right\|\\
&\hspace{1cm}+\sup_{\theta\in\Theta_{\delta}}\left\|\sum_{k_1,k_2\geq 1}\frac{\partial}{\partial\theta_i}\gamma_{k_1}(\theta)\left(\frac{\partial}{\partial\theta}\frac{\partial}{\partial\theta_j}\gamma_{k_2}(\theta)\right)X_{t-k_1}X_{t-k_2}\right\|\\
&\leq \sum_{k_1,k_2\geq 1}\sup_{\theta\in\Theta_{\delta}}\left\|\frac{\partial}{\partial\theta}\frac{\partial}{\partial\theta_i}\gamma_{k_1}(\theta)\right\|\sup_{\theta\in\Theta_{\delta}}\left\|\frac{\partial}{\partial\theta_j}\gamma_{k_2}(\theta)\right\|\left|X_{t-k_1}\right|\left|X_{t-k_2}\right|\\
&\hspace{1cm}+\sum_{k_1,k_2\geq 1}\sup_{\theta\in\Theta_{\delta}}\left\|\frac{\partial}{\partial\theta_i}\gamma_{k_1}(\theta)\right\|\sup_{\theta\in\Theta_{\delta}}\left\|\frac{\partial}{\partial\theta}\frac{\partial}{\partial\theta_j}\gamma_{k_2}(\theta)\right\|\left|X_{t-k_1}\right|\left|X_{t-k_2}\right|\\
&\leq K\sum_{k_1,k_2\geq 1}\left(\log(k_1) \right)^2k_1^{-1-d_1}\log(k_2)k_2^{-1-d_1} \left|X_{t-k_1}\right|\left|X_{t-k_2}\right|\\
&\hspace{1cm}+K\sum_{k_1,k_2\geq 1}\log(k_1)k_1^{-1-d_1}\left(\log(k_2) \right)^2k_2^{-1-d_1} \left|X_{t-k_1}\right|\left|X_{t-k_2}\right| .
\end{align*}
Consequently, we obtain
\begin{align}\nonumber
\mathbb{E}_{\theta_0}\left[ \sup_{\theta\in\Theta_{\delta}}\left\|\frac{\partial}{\partial\theta}\left(\frac{\partial}{\partial\theta_i}\epsilon_t(\theta)\frac{\partial}{\partial\theta_j}\epsilon_t(\theta) \right)\right\|\right]&\leq K\sum_{k_1,k_2\geq 1}\left(\log(k_1) \right)^2k_1^{-1-d_1}\log(k_2)k_2^{-1-d_1} \sup_{t\in\mathbb{Z}}\mathbb{E}_{\theta_0}\left|X_t\right|^2\\ \nonumber
&\hspace{0.5cm}+K\sum_{k_1,k_2\geq 1}\log(k_1)k_1^{-1-d_1}\left(\log(k_2) \right)^2k_2^{-1-d_1} \sup_{t\in\mathbb{Z}}\mathbb{E}_{\theta_0}\left|X_t\right|^2 \\
&\le K. \label{derivee_troisieme}
\end{align}
Following the same approach used in \eqref{derivee_troisieme}, we have
\begin{align}\label{third_derivatives}
\mathbb{E}_{\theta_0}\left[ \sup_{\theta\in\Theta_{\delta}}\left\|\frac{\partial}{\partial\theta}\left\lbrace\epsilon_t(\theta)\frac{\partial^2}{\partial\theta_i\partial\theta_j}\epsilon_t(\theta) \right\rbrace\right\|\right]<\infty.
\end{align}
A Taylor expansion implies that there exists a random variable $\theta_{n}^{**}$ between $\theta^*_{n}$ and $\theta_0$ such that
\begin{align*}
\left|J_n^{*}(\theta^*_{n})(i,j)-J_n^{*}(\theta_0)(i,j)\right|&=\left|\frac{\partial}{\partial\theta}J_n^{*}(\theta_{n}^{**})(i,j)\cdot(\theta^*_{n}-\theta_0)\right| \\
& \leq \sup_{\theta\in\Theta_{\delta}}\left\|\frac{\partial}{\partial\theta}J_n^{*}(\theta)(i,j)\right\|\left\|\theta^*_{n}-\theta_0\right\|\\
&\leq \frac{2}{n}\sum_{t=1}^n\sup_{\theta\in\Theta_{\delta}}\left\|\frac{\partial}{\partial\theta}\left(\frac{\partial}{\partial\theta_i}\epsilon_t(\theta)\frac{\partial}{\partial\theta_j}\epsilon_t(\theta) \right)\right\|\left\|\theta^*_{n}-\theta_0\right\|\\
&\hspace{1cm}+\frac{2}{n}\sum_{t=1}^n\sup_{\theta\in\Theta_{\delta}}\left\|\frac{\partial}{\partial\theta}\left\lbrace\epsilon_t(\theta)\frac{\partial^2}{\partial\theta_i\partial\theta_j}\epsilon_t(\theta) \right\rbrace\right\|\left\|\theta^*_{n}-\theta_0\right\| .
\end{align*}
Proposition~\ref{prop:ecart-eps} (which implies the almost-sure convergence of the LSE $\theta^*_{n}$ to $\theta_0$), the ergodic theorem and Equations \eqref{derivee_troisieme} and \eqref{third_derivatives} imply that $\lim_{n\to\infty} |J_n^{*}(\theta^*_{n})(i,j)-J_n^{*}(\theta_0)(i,j)|=0$ a.s.\\

To prove the almost-sure convergence of the first term of the right hand side of \eqref{ineq:J} it suffices to show that
$$\frac{1}{n}\sum_{t=1}^n\sup_{\theta\in\Theta_{\delta}}\left|\frac{\partial}{\partial\theta_i}\epsilon_t(\theta)\frac{\partial}{\partial\theta_j}\epsilon_t(\theta)-\frac{\partial}{\partial\theta_i}\tilde{\epsilon}_t(\theta)\frac{\partial}{\partial\theta_j}\tilde{\epsilon}_t(\theta)\right|$$
\noindent and
$$\frac{1}{n}\sum_{t=1}^n\sup_{\theta\in\Theta_{\delta}}\left|\epsilon_t(\theta)\frac{\partial^2}{\partial\theta_i\partial\theta_j}\epsilon_t(\theta)-\tilde{\epsilon}_t(\theta)\frac{\partial^2}{\partial\theta_i\partial\theta_j}\tilde{\epsilon}_t(\theta)\right|$$
converge  almost-surely to 0. On one hand, we have
\begin{align*}
\frac{1}{n}\sum_{t=1}^n&\sup_{\theta\in\Theta_{\delta}}\left|\frac{\partial}{\partial\theta_i}\epsilon_t(\theta)\frac{\partial}{\partial\theta_j}\epsilon_t(\theta)-\frac{\partial}{\partial\theta_i}\tilde{\epsilon}_t(\theta)\frac{\partial}{\partial\theta_j}\tilde{\epsilon}_t(\theta)\right|\\
&\leq \frac{1}{n}\sum_{t=1}^n\left\lbrace  \sup_{\theta\in\Theta_{\delta}}\left|\frac{\partial}{\partial\theta_i}\epsilon_t(\theta)-\frac{\partial}{\partial\theta_i}\tilde{\epsilon}_t(\theta)\right|\sup_{\theta\in\Theta_{\delta}}\left|\frac{\partial}{\partial\theta_j}\epsilon_t(\theta)\right|\right.\\
&\qquad\left.+\sup_{\theta\in\Theta_{\delta}}\left|\frac{\partial}{\partial\theta_i}\tilde{\epsilon}_t(\theta)\right|\sup_{\theta\in\Theta_{\delta}}\left|\frac{\partial}{\partial\theta_j}\tilde{\epsilon}_t(\theta)-\frac{\partial}{\partial\theta_j}\epsilon_t(\theta)\right|\right\rbrace \\
&\leq \left (\frac{1}{n}\sum_{t=1}^n\left( \sup_{\theta\in\Theta_{\delta}}\left|\frac{\partial}{\partial\theta_i}\epsilon_t(\theta)-\frac{\partial}{\partial\theta_i}\tilde{\epsilon}_t(\theta)\right|\right) ^2\right )^{1/2}\left (\frac{1}{n}\sum_{t=1}^n\left( \sup_{\theta\in\Theta_{\delta}}\left|\frac{\partial}{\partial\theta_j}\epsilon_t(\theta)\right|\right) ^2\right )^{1/2}\\
&\qquad\qquad+\left (\frac{1}{n}\sum_{t=1}^n\left( \sup_{\theta\in\Theta_{\delta}}\left|\frac{\partial}{\partial\theta_i}\tilde{\epsilon}_t(\theta)\right|\right) ^2\right )^{1/2}\left (\frac{1}{n}\sum_{t=1}^n\left( \sup_{\theta\in\Theta_{\delta}}\left|\frac{\partial}{\partial\theta_j}\tilde{\epsilon}_t(\theta)-\frac{\partial}{\partial\theta_j}\epsilon_t(\theta)\right|\right) ^2\right )^{1/2}.
\end{align*}
From \eqref{AR-Inf} and \eqref{Coef-Gamma}, it follows that
\begin{align*}
\mathbb{E}_{\theta_0}\left[\left(\sup_{\theta\in\Theta_{\delta}}\left|\frac{\partial}{\partial\theta_j}\epsilon_t(\theta)\right|\right) ^2\right]
&\leq \sup_{t\in\mathbb{Z}}\mathbb{E}_{\theta_0}\left|X_t\right|^2\left(\sum_{k_1\geq 1}\log(k_1)k_1^{-1-d_1} \right) ^2
<\infty.
\end{align*}
Similar calculations can be done to obtain
$$\mathbb{E}_{\theta_0}\left[\left(\sup_{\theta\in\Theta_{\delta}}\left|\frac{\partial}{\partial\theta_i}\tilde{\epsilon}_t(\theta)\right|\right) ^2\right] <\infty.$$
Ces\`{a}ro's Lemma, Remark~\ref{rmq:ecart-deriv} and the ergodic theorem yield
\begin{align*}
\frac{1}{n}\sum_{t=1}^n\sup_{\theta\in\Theta_{\delta}}\left|\frac{\partial}{\partial\theta_i}\epsilon_t(\theta)\frac{\partial}{\partial\theta_j}\epsilon_t(\theta)-\frac{\partial}{\partial\theta_i}\tilde{\epsilon}_t(\theta)\frac{\partial}{\partial\theta_j}\tilde{\epsilon}_t(\theta)\right|\underset{n\to\infty}{\overset{\text{a.s.}}{\longrightarrow}}0.
\end{align*}
On the other hand, one similarly may prove that
$$\frac{1}{n}\sum_{t=1}^n\sup_{\theta\in\Theta_{\delta}}\left|\epsilon_t(\theta)\frac{\partial^2}{\partial\theta_i\partial\theta_j}\epsilon_t(\theta)-\tilde{\epsilon}_t(\theta)\frac{\partial^2}{\partial\theta_i\partial\theta_j}\tilde{\epsilon}_t(\theta)\right|\underset{n\to\infty}{\overset{\text{a.s.}}{\longrightarrow}}0.$$
Thus
\begin{align*}
\sup_{\theta\in\Theta_{\delta}}\left\|{J}_n(\theta)-J_n^{*}(\theta)\right\|
\underset{n\to\infty}{\overset{\text{a.s.}}{\longrightarrow}}0
\end{align*}
and the lemma is proved.
\end{proof}
\begin{lemme}\label{lemme:derQn} Under the assumptions of Theorem~\ref{th:convergence}, we have almost-surely
\begin{equation*}
\lim_{n\to\infty}\frac{\partial}{\partial\theta}Q_n\left(\theta_0\right)=0.
\end{equation*}
\end{lemme}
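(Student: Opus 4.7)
The plan is to compare $\partial Q_n(\theta_0)/\partial\theta$ with its ``infinite-past'' analogue $\partial O_n(\theta_0)/\partial\theta$, show that their difference vanishes almost-surely thanks to the truncation estimates of Proposition~\ref{prop:ecart-eps} and Remark~\ref{rmq:ecart-deriv}, and finally conclude that $\partial O_n(\theta_0)/\partial\theta\to 0$ a.s.\ via the ergodic theorem combined with the uncorrelatedness of $\epsilon_t$ and $\partial\epsilon_t(\theta_0)/\partial\theta$.

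First I would write
\begin{equation*}
\frac{\partial}{\partial\theta_k}Q_n(\theta_0)=\frac{2}{n}\sum_{t=1}^n\tilde\epsilon_t(\theta_0)\frac{\partial}{\partial\theta_k}\tilde\epsilon_t(\theta_0),\qquad \frac{\partial}{\partial\theta_k}O_n(\theta_0)=\frac{2}{n}\sum_{t=1}^n\epsilon_t\frac{\partial}{\partial\theta_k}\epsilon_t(\theta_0),
\end{equation*}
and split the difference of the two averages into
\begin{equation*}
\frac{2}{n}\sum_{t=1}^n\bigl(\tilde\epsilon_t(\theta_0)-\epsilon_t(\theta_0)\bigr)\frac{\partial}{\partial\theta_k}\tilde\epsilon_t(\theta_0)+\frac{2}{n}\sum_{t=1}^n\epsilon_t(\theta_0)\Bigl(\frac{\partial}{\partial\theta_k}\tilde\epsilon_t(\theta_0)-\frac{\partial}{\partial\theta_k}\epsilon_t(\theta_0)\Bigr).
\end{equation*}
Applying Cauchy--Schwarz to each of these two Ces\`aro means, the job is to control four terms of the form $\frac{1}{n}\sum_{t=1}^n A_t^2$. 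For the ``error'' factors, Proposition~\ref{prop:ecart-eps} and Remark~\ref{rmq:ecart-deriv} give $\sup_{\theta\in\Theta_\kappa}|\epsilon_t(\theta)-\tilde\epsilon_t(\theta)|\to 0$ and the analogous statement for $\partial/\partial\theta_k$, so Ces\`aro's lemma makes these averages tend to $0$ a.s. For the ``bounded'' factors, the representations \eqref{epsil-th} and \eqref{deriveesecepsil} together with the decay \eqref{Coef-Gamma} yield a finite second moment uniformly in $\theta\in\Theta_\kappa$ (this is exactly the moment bound already used in the proof of Lemma~\ref{lemme:der2Qn}), so the ergodic theorem under Assumption~\textbf{(A1)} controls them.

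It then remains to show $\frac{2}{n}\sum_{t=1}^n \epsilon_t\,\partial\epsilon_t(\theta_0)/\partial\theta_k\to 0$ a.s. From \eqref{deriveesecepsil} we have $\partial\epsilon_t(\theta_0)/\partial\theta_k=\sum_{i\ge 1}\dot\lambda_{i,k}(\theta_0)\epsilon_{t-i}$, which is a measurable function of $(\epsilon_s)_{s\le t-1}$ only. Since $(\epsilon_t)_{t\in\mathbb Z}$ is a stationary and uncorrelated sequence with mean zero, $\mathbb{E}[\epsilon_t\,\partial\epsilon_t(\theta_0)/\partial\theta_k]=0$. The summability provided by \eqref{eqasym-lambda} and $\mathbb{E}\epsilon_t^2=\sigma_\epsilon^2<\infty$ ensures integrability of $\epsilon_t\,\partial\epsilon_t(\theta_0)/\partial\theta_k$, and the ergodicity of $(\epsilon_t)_{t\in\mathbb Z}$ (\textbf{(A1)}) transfers to the product sequence. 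Birkhoff's ergodic theorem then gives almost-sure convergence to $0$, completing the proof.

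The main obstacle is not conceptual but book-keeping: one must justify the passage to the limit in the truncation step uniformly enough to apply Cauchy--Schwarz and Ces\`aro cleanly, and check the uniform $\mathbb L^2$ bounds on $\partial\tilde\epsilon_t(\theta_0)/\partial\theta_k$ and $\partial\epsilon_t(\theta_0)/\partial\theta_k$. These bounds, however, are the same ones already established inside the proof of Lemma~\ref{lemme:der2Qn} via \eqref{Coef-Gamma}, so they can be quoted rather than redone.
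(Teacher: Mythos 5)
Your proposal is correct and follows essentially the same route as the paper: the same splitting of $\partial Q_n(\theta_0)/\partial\theta$ into a truncation-error part, handled via Proposition~\ref{prop:ecart-eps}, Remark~\ref{rmq:ecart-deriv}, Cauchy--Schwarz and Ces\`aro exactly as in the proof of Lemma~\ref{lemme:der2Qn}, plus the main term $\frac{2}{n}\sum_{t=1}^n\epsilon_t\,\partial\epsilon_t(\theta_0)/\partial\theta$, killed by the ergodic theorem under \textbf{(A1)} together with the uncorrelatedness of $\epsilon_t$ with $\partial\epsilon_t(\theta_0)/\partial\theta$. The paper merely states these steps more tersely, deferring the book-keeping to the proof of Lemma~\ref{lemme:der2Qn}, which you have spelled out correctly.
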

\begin{proof}
Observe first that, for any $i\in\{1,\ldots,p+q+1\}$,
\begin{equation}\label{eq:der-Qn}
\frac{\partial}{\partial\theta_i}Q_n\left(\theta_0\right)=\frac{2}{n}\sum_{t=1}^n\left(\tilde{\epsilon}_t(\theta_0)\frac{\partial\tilde{\epsilon}_t(\theta_0)}{\partial\theta_i}-\epsilon_t\frac{\partial\epsilon_t(\theta_0)}{\partial\theta_i}\right)+\frac{2}{n}\sum_{t=1}^n\epsilon_t\frac{\partial\epsilon_t(\theta_0)}{\partial\theta_i}.
\end{equation}
Since $\epsilon_t$ and $\partial\epsilon_t(\theta_0)/\partial\theta_i$ are uncorrelated, the ergodic theorem in \textbf{(A1)} implies that the second term in the right hand side of \eqref{eq:der-Qn} converges almost-surely to 0 as $n\to\infty$.\\
Note now that 
\begin{align*}
\tilde{\epsilon}_t(\theta_0)\frac{\partial\tilde{\epsilon}_t(\theta_0)}{\partial\theta_i}-\epsilon_t\frac{\partial\epsilon_t(\theta_0)}{\partial\theta_i}&=\left(\tilde{\epsilon}_t(\theta_0)-\epsilon_t\right)\frac{\partial\tilde{\epsilon}_t(\theta_0)}{\partial\theta_i}+\epsilon_t\left(\frac{\partial\tilde{\epsilon}_t(\theta_0)}{\partial\theta_i}-\frac{\partial\epsilon_t(\theta_0)}{\partial\theta_i}\right).
\end{align*}
We use, as in the proof of Lemma \ref{lemme:der2Qn}, Proposition~\ref{prop:ecart-eps} and Remark~\ref{rmq:ecart-deriv} to complete the proof of the lemma.
\end{proof}

The proof of the theorem follows from Proposition~\ref{prop:ecart-eps}, Lemma 16 of \citep{BMES2019} and Lemmas \ref{lemme:der2Qn} and \ref{lemme:derQn}.

\subsection{Proof of Theorem~\ref{th:asy-norm}}

In view of \eqref{eq: one-step est} and by a Taylor expansion of the function $\partial Q_n(\cdot)/ \partial\theta$ around $\theta_0$, we have
\begin{align*}
\sqrt{n}\left(\overline{\theta}_n-\theta_0\right)&=\sqrt{n}\left(\theta^{*}_n-\theta_0\right)-\sqrt{n}\left\lbrace\frac{\partial^2}{\partial\theta\partial\theta^\prime}Q_n\left(\theta^{*}_n\right)\right\rbrace^{-1}\left\lbrace\frac{\partial}{\partial\theta}Q_n\left(\theta_0\right)+\left[\frac{\partial^2}{\partial\theta_i\partial\theta_j}Q_n\left(\tilde{\theta}_{n,i,j}\right)\right]\left(\theta^{*}_n-\theta_0\right)\right\rbrace,
\end{align*}
where the $\tilde{\theta}_{n,i,j}$'s are between $\theta_n^{*}$ and $\theta_0$.
\noindent Hence, it follows that
\begin{align*}
\sqrt{n}\left(\overline{\theta}_n-\theta_0\right)&=\left\lbrace\frac{\partial^2}{\partial\theta\partial\theta^\prime}Q_n\left(\theta^{*}_n\right)\right\rbrace^{-1}n^{\delta/2}\left\lbrace\frac{\partial^2}{\partial\theta\partial\theta^\prime}Q_n\left(\theta^{*}_n\right)-\left[\frac{\partial^2}{\partial\theta_i\partial\theta_j}Q_n\left(\tilde{\theta}_{n,i,j}\right)\right]\right\rbrace n^{\delta/2}\left(\theta^{*}_n-\theta_0\right)n^{1/2-\delta}\\
&\qquad -\left\lbrace\frac{\partial^2}{\partial\theta\partial\theta^\prime}Q_n\left(\theta^{*}_n\right)\right\rbrace^{-1}\sqrt{n}\frac{\partial}{\partial\theta}Q_n\left(\theta_0\right).
\end{align*}
We use Lemmas 16-19 of \cite{BMES2019}, Proposition~\ref{prop:lipschitz} and Slutsky's theorem to complete the proof.

\subsection{Proof of Proposition~\ref{prop:lipschitz}}

For any $\theta^{(1)}, \theta^{(2)}\in\Theta$, the mean value theorem gives 
\begin{equation*}
\frac{\partial^2}{\partial\theta_i\partial\theta_j}Q_n\left(\theta^{(1)}\right)-\frac{\partial^2}{\partial\theta_i\partial\theta_j}Q_n\left(\theta^{(2)}\right)=\frac{\partial^3}{\partial\theta\partial\theta_i\partial\theta_j}Q_n\left((1-c)\theta^{(1)}+c\theta^{(2)}\right)\mathbf{\cdot} \left\lbrace \theta^{(1)}-\theta^{(2)}\right\rbrace,
\end{equation*}
for some $c$ between 0 and 1. \\
In view of \eqref{Qn} and for all $\theta\in\Theta$, a simple calculation of derivative leads to
\begin{equation*}
\frac{\partial^3}{\partial\theta_i\partial\theta_j\partial\theta_k}Q_n\left(\theta\right)=T_{1,n}(\theta)+T_{2,n}(\theta)+T_{3,n}(\theta)+T_{4,n}(\theta),
\end{equation*}
where
\begin{align*}
T_{1,n}(\theta)&=\frac{2}{n}\sum_{t=1}^n\frac{\partial^2\tilde{\epsilon}_t(\theta)}{\partial\theta_i\partial\theta_j}\frac{\partial\tilde{\epsilon}_t(\theta)}{\partial\theta_k}, \\
T_{2,n}(\theta)&=\frac{2}{n}\sum_{t=1}^n\frac{\partial\tilde{\epsilon}_t(\theta)}{\partial\theta_j}\frac{\partial^2\tilde{\epsilon}_t(\theta)}{\partial\theta_i\partial\theta_k},\\ 
T_{3,n}(\theta)&=\frac{2}{n}\sum_{t=1}^n\frac{\partial\tilde{\epsilon}_t(\theta)}{\partial\theta_i}\frac{\partial^2\tilde{\epsilon}_t(\theta)}{\partial\theta_j\partial\theta_k}, 
\end{align*}
and 
\begin{align*}
T_{4,n}(\theta)&=\frac{2}{n}\sum_{t=1}^n\tilde{\epsilon}_t(\theta)\frac{\partial^3\tilde{\epsilon}_t(\theta)}{\partial\theta_i\partial\theta_j\partial\theta_k}.
\end{align*}
We use Equations \eqref{epsilon-tilde--} and \eqref{Coef-Gamma} to obtain
\begin{align*}
\mathbb{E}\left[\sup_{\theta\in\Theta_\kappa}\left|T_{1,n}(\theta)\right|\right]&\leq \frac{2}{n}\sum_{t=1}^n\mathbb{E}\left[\sup_{\theta\in\Theta_\kappa}\left|\frac{\partial^2\tilde{\epsilon}_t(\theta)}{\partial\theta_i\partial\theta_j}\right|\sup_{\theta\in\Theta_\kappa}\left|\frac{\partial\tilde{\epsilon}_t(\theta)}{\partial\theta_k}\right|\right]\\
&=\frac{2}{n}\sum_{t=1}^n\mathbb{E}\left[\sup_{\theta\in\Theta_\kappa}\left|\sum_{\ell=1}^\infty \frac{\partial^2\gamma_{\ell}^t(\theta)}{\partial\theta_i\partial\theta_j}X_{t-\ell}\right|\sup_{\theta\in\Theta_\kappa}\left|\sum_{\ell=1}^\infty \frac{\partial\gamma_{\ell}^t(\theta)}{\partial\theta_k}X_{t-\ell}\right|\right]\\
&\leq\frac{2}{n}\sum_{t=1}^n\sum_{\ell_1,\ell_2=1}^{t-1}\sup_{\theta\in\Theta_\kappa}\left| \frac{\partial^2\gamma_{\ell_1}(\theta)}{\partial\theta_i\partial\theta_j}\right|\sup_{\theta\in\Theta_\kappa}\left| \frac{\partial\gamma_{\ell_2}(\theta)}{\partial\theta_k}\right|\mathbb{E}\left[\left| X_{t-\ell_1}X_{t-\ell_2}\right|\right]\\
&\leq 2\sup_{t\in\mathbb{Z}}\mathbb{E} \left[X_t^2\right]\left(\sum_{\ell=1}^{\infty}\ell^{-1-d_1}\left\lbrace\log(\ell)\right\rbrace^2\right)\left(\sum_{\ell=1}^{\infty}\ell^{-1-d_1}\log(\ell)\right)<\infty.
\end{align*}
Thanks to Markov's inequality, we deduce that 
$$\sup_{\theta\in\Theta_\kappa}\left|T_{1,n}(\theta)\right|=\mathrm{O}_{\mathbb{P}}(1).$$
Similar calculation can be done to show that $T_{2,n}(\theta)$, $T_{3,n}(\theta)$ and $T_{4,n}(\theta)$ are bounded in probability uniformly in $\theta$. It follows then that 
$$\sup_{\theta\in\Theta_\kappa}\left\|\frac{\partial^3}{\partial\theta\partial\theta_i\partial\theta_j}Q_n\left(\theta\right)\right\|=\mathrm{O}_{\mathbb{P}}(1).$$
This is enough to complete the proof.\\

\textbf{Acknowledgments.} This research benefited from the support of the ANR project "Efficient inference for large and high-frequency data" (ANR-21-CE40-0021), the "Chair Risques \'Emergents ou Atypiques en Assurance", under the aegis of Fondation du Risque, a joint initiative by Le Mans University and MMA company, member of Covea group and the "Chair Impact de la Transition Climatique en Assurance", under the aegis of Fondation du Risque, a joint initiative by Le Mans University and Groupama Centre-Manche company, member of Groupama group.

\newpage

\bibliographystyle{apalike}
\bibliography{biblio-youss}
\end{document}